\documentclass[a4paper,12pt]{article}
\usepackage{amssymb}
\usepackage{amsthm}
\usepackage{tikz,url}
\usepackage{tikz,pgfplots}
\usetikzlibrary{decorations.markings}
\usepackage{amsmath,amssymb}
\usepackage{todonotes}
\usetikzlibrary{decorations.markings}
\usepackage{color}
\usepackage{graphicx}
\usepackage{placeins,caption}
\usetikzlibrary{snakes,arrows}
\usepackage{enumitem}
\usepackage{float}
\usepackage[T1]{fontenc}

\newtheorem{theorem}{Theorem}[section]
\newtheorem{lemma}[theorem]{Lemma}

\newtheorem{proposition}[theorem]{Proposition}

\newcommand{\gp}{{\rm gp}}
\newcommand{\diam}{\rm diam}
\newcommand{\igp}{{\rm igp}}
\newcommand{\ip}{{\rm ip}}
\newcommand{\ic}{{\rm ic}}

\tikzset{middlearrow/.style={
			decoration={markings,
				mark= at position 0.7 with {\arrow[scale=2]{#1}} ,
			},
			postaction={decorate}
		}
	}

	\tikzset{midarrow/.style={
		decoration={markings,
			mark= at position 0.5 with {\arrow[scale=2]{#1}} ,
		},
		postaction={decorate}
	}
}

\let\deg\relax
\DeclareMathOperator {\deg} {deg}

\textwidth 15.0cm
\textheight 20.5cm
\oddsidemargin 0.4cm
\evensidemargin 0.4cm
\voffset -1cm

\begin{document}

\title{General position and mutual-visibility in shadow graphs}	
		
\author{Haritha S.\footnote{harithasreelatha1994@gmail.com} \ and Ullas Chandran S. V.\footnote{svuc.math@gmail.com}
\\
\small Department of Mathematics, Mahatma Gandhi College, University of Kerala, \\ \small Kesavadasapuram, Thiruvananthapuram 695004, Kerala, India\\
}
\date{\today}
\maketitle
\begin{abstract}
The \emph{general position problem} in graphs asks for a largest set of vertices in which no three lie on a common shortest path. The \emph{mutual-visibility problem} seeks a largest set of vertices such that every pair is connected by a shortest path whose internal vertices lie outside the set. In this paper, we investigate the general position and mutual-visibility problems for shadow graphs. Sharp general bounds are established for both the general position number and the mutual-visibility number of shadow graphs, and classes of graphs attaining these extremal values are characterized. Furthermore, these invariants are determined for several standard classes of shadow graphs, including shadow graphs of cycles, multipartite graphs, and trees.

\end{abstract}
\noindent {\bf Key words:} General position number, general position set, mutual visibility number.

\medskip\noindent

{\bf AMS Subj.\ Class:} 05C12; 05C69.

\section{Introduction}\label{sec:intro}
General position and mutual visibility are active and complementary topics in metric and algorithmic graph theory, where advances in one often influence the other. Since general position sets were independently introduced in graph theory in \cite{3} and \cite{19}, research in this area has grown rapidly. A recent survey of the general position problem can be found in \cite{5}; see also the following selected recent publications: \cite{1,9,12,17,18,29,30}. Let $S$ be a subset of vertices of a graph $G$. Two vertices $u$ and $v$ are \emph{$S$-positionable} if, on every shortest $u,v$- path $P$, no vertex of $S$ other than $u$ or $v$ lies on $P$. Then $S$ is a \emph{general position set} if every pair of vertices $u, v \in S$ are $S$-positionable. A largest general position set of a graph $G$ is called a $\gp-$set of $G$ and its size is the \textit{general position number} $\gp(G)$ of $G$. A vertex subset $S$ that is both independent and in general position is called an independent general position set. The cardinality of the largest such set is called the independent general position number, denoted by 
$\igp(G)$. This concept was investigated in \cite{27}.

The mutual-visibility problem, introduced by Di Stefano in 2022 \cite{24} from the perspective of robotic visibility, has gained significant attention, with notable contributions including \cite{mv1, mv2, mv3, mv4, mv5, mv6, mv7, mv8, mv9}.
Given a vertex set $S$ in a graph $G$, two vertices $u$ and $v$ are said to be \emph{mutually-visible} with respect to $S$, or simply \emph{$S$-visible}, if there exists a shortest $u,v$-path $P$ such that $V(P) \cap S \subseteq \{u, v\}$.
The set $S$ is a
mutual-visibility set if any two vertices from $S$ are $S$-visible. A largest mutual-visibility set is called a $\mu$-set and its size is called the mutual-visibility number $\mu(G)$ of $G$. Since every general position set of $G$ is obviously a mutual-visibility set of $G$, it holds that $\gp(G) \leq \mu(G)$ for every graph $G$. Let $X$ be a mutual-visibility set of a graph G. If $G[X]$ is edgeless, then we say that $X$ is an independent mutual-visibility set. The independent mutual-visibility number $\mu_{i}(G)$ of $G$ is the size of a largest independent mutual-visibility set \cite{mv5}.

Several variants of mutual-visibility sets have been introduced in \cite{mv1,mv3}. One such variant, which we used in this paper, is defined as follows. A set $S$ of vertices is a \emph{total mutual-visibility set} in $G$ if every pair of vertices in $G$ is $S$-visible. A largest total mutual-visibility set is called a $\mu_{t}$-set, and its size is the \emph{total mutual-visibility number} of $G$, denoted by $\mu_{t}(G)$.

Following the concept of independent mutual-visibility sets, we define a set $S$ that is both independent and a total mutual-visibility set as an \emph{independent total mutual-visibility set} of $G$. A largest independent total mutual-visibility set is called a $\mu_{it}$-set, and its size is the \emph{independent total mutual-visibility number} of $G$, denoted by $\mu_{it}(G)$.

Among the various parameters studied in graph theory, the chromatic number is one of the most fundamental. 
The \emph{chromatic number} $\chi(G)$ of a graph $G$ is the smallest number of colors required to color its vertices so that adjacent vertices receive distinct colors. 
Clearly, $\chi(G) \geq \omega(G)$, where $\omega(G)$ denotes the size of a largest clique in $G$. 
Nevertheless, a graph may have an arbitrarily large chromatic number even when it contains no triangles, that is, when $\omega(G) = 2$. 

In 1955, Jan Mycielski introduced a remarkable construction, known as the \emph{Mycielskian} or \emph{Mycielski graph}~\cite{mycielski}. 
This construction preserves the triangle-free property while increasing the chromatic number. 
By repeatedly applying this construction to a triangle-free graph, one can obtain triangle-free graphs with arbitrarily large chromatic numbers. A concept closely related to this construction is the \emph{shadow graph}. 
The \emph{shadow graph} $S(G)$ of a graph $G$ is obtained by adding a new vertex $v'$ for each vertex $v$ of $G$, and joining $v'$ to all neighbors of $v$ in $G$~\cite{11}. 
The vertex $v'$ is called the \emph{shadow vertex} of $v$.
The \emph{star shadow graph} of $G$ is obtained from $S(G)$ by adding a new vertex $s^*$ and joining $s^*$ to all shadow vertices.

As noted in~\cite{rcbook}, Mycielski’s construction can be viewed as the repeated formation of star shadow graphs, starting from the cycle $C_5$. 
In particular, the star shadow graph of $C_5$ is known as the \emph{Gr\"otzsch graph}, a triangle-free graph with chromatic number four. The general position number of Mycielskian graphs was studied in \cite{26}.
In this paper, we are interested in both the mutual visibility number and the general position number of shadow graphs. In section 3, we focus on the general position in shadow graphs. We obtain tight bounds for the general position number of shadow graphs and provide exact values for certain specific classes of these graphs. 

In Section 4, we establish tight bounds for the mutual visibility number of shadow graphs, and exact values are determined for the shadow graphs of some specific graph classes.

 \begin{figure}[H]
  \centering
    \includegraphics[width=0.55\linewidth]{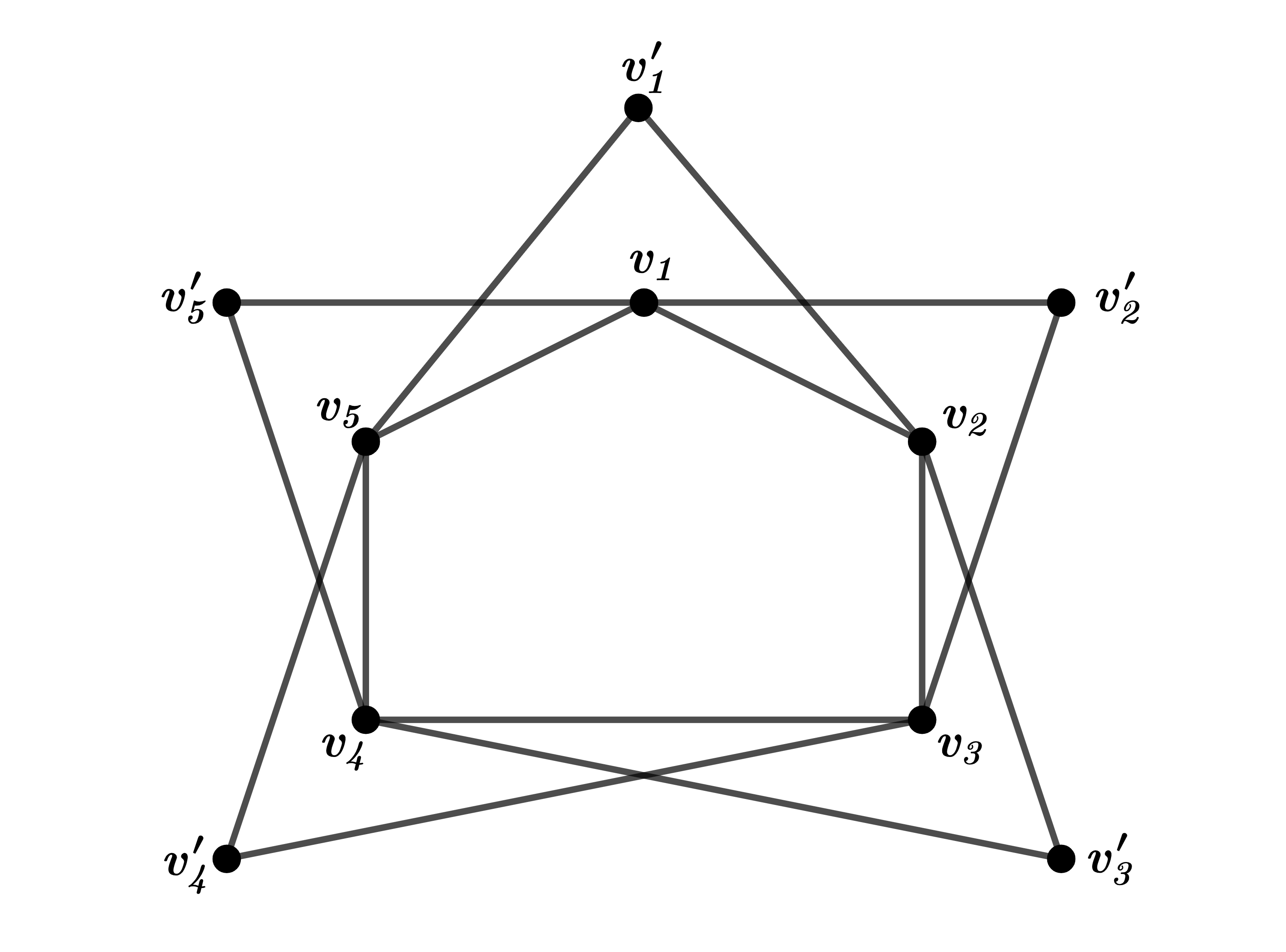}
    \caption{Shadow graph of $C_5$.}
    \label{fig:shadow graph}
\end{figure}
 
\section{Preliminaries}

 We now introduce the terminologies used throughout this paper. By a graph $G = (V(G), E(G))$ we mean a simple connected graph. The order of $G$ will be denoted by $n(G)$ and the minimum and the maximum degree of $G$ respectively by $\delta(G)$ and
$\Delta(G)$.  The \emph{open neighbourhood} $N_G(u)$ of $u \in V(G)$ is $\{ v \in V(G): uv\in E(G)\}$, whilst the \emph{closed neighbourhood} $N_G[u]$ is defined by $N_G[u] = N_G(u) \cup \{ u\} $. The degree $deg_{G}(v)$ of $u$ is $deg_{G}(v)=|N_G(u)|$, in particular, a vertex of degree one is a \emph{pendent vertex}(or a \emph{leaf}). We denote the set of all pendent vertices of $G$ as $L(G)$ and $l(G)=|L(G)|$.  For any two vertices $u,v\in V(G)$, the distance $d_{G}(u,v)$ is defined as the length of a shortest $u,v$-path (also called a $u,v$-{\emph{geodesic}) in $G$. The \emph{diameter} of a graph is defined as $\diam($$G$$)=\max\{d_{G}(u,v) : u,v \in V(G)\}$. A path in $G$ whose length equals $\diam($$G$$)$ is called a \emph{diametral path} of $G$. The \emph{isometric-path number} of a graph $G$, denoted by $\ip(G)$, is the minimum number of isometric paths (geodesics) required to cover all vertices of $G$.
Similarly, the \emph{isometric-cycle number} of $G$, denoted by $\ic(G)$, is the minimum number of isometric cycles required to cover the vertices of $G$.
 Given a set $S \subseteq V(G)$, we call a geodesic containing at most two vertices of $S$, \emph{S-sound}(or simply \emph{sound} if the set is clear from the context) and a geodesic containing at least three vertices of 
$S$ is called \emph{S-unsound} (or simply \emph{unsound} if the set is clear from the context). In $S(G)$, a shadow vertex $u^{\prime}$ of $u$ is also called the twin of $u$ (and conversely $u$ is the twin of $u^{\prime}$). For any set $X = \{x_{1},\dots \dots x_{t}\} \subset V(G)$, we denote the set of all shadow vertices of $X$ by $X^{\prime}= \{x_{1}',\dots \dots x_{t}'\}$. For notational convenience, we denote the set of shadow vertices of $S(G)$ by $V'$. For any subsets $V_1, V_2  \subseteq V(G)$, we denote the set of edges of $G$ from $V_1$ to $V_2$ in $G$ by $(V_1,V_2)$. If $\pi $ stands for a position type invariant (mutual visibility $\mu$ or general position $\gp$), then we call a subset of maximum cardinality of that type a \emph{$\pi $-set}. We define a partition of $V(G)$ using any \emph{$\pi $-set} of $S(G)$ as follows: with any \emph{$\pi $-set} $S$ of $S(G)$, we associate a \emph{$\pi-$partition} $(V_1,V_2,V_3,V_4)$ of $V(G)$, where,
\begin{itemize}
   \item $V_1=\{v \in V(G) : v,v^{\prime} \in S\}$,
   \item $V_2=\{v\in V(G) : v \notin S , v^{\prime} \in S\}$,
   \item $V_3=\{v\in V(G) : v \in S , v^{\prime} \notin S \}$ \text{ and}
   \item $V_4=\{v \in V(G) : v,v^{\prime} \notin S\}$.
\end{itemize}
Then we have $\pi(S(G)) = n + n_1-n_4$, where $n$ is the order of the graph and $n_i = |V_i|$ for $i = 1, 2, 3, 4$. 

Finally, for a positive integer $k$, the set $\{1, \dots, k\}$ is denoted by $[k]$. We conclude this section by presenting the following useful results.

\begin{lemma}\mbox{\upshape\cite{5}} \label{distance_shadow}
    
\begin{enumerate}
    \item  Let $x$ and $y$ be non-adjacent vertices in $G$. Then,
    \begin{enumerate}
        \item  $d_{S(G)} (x, y) = d_{G}(x, y)$.
        \item $d_{S(G)}(x, y^{\prime}) = d_{G}(x, y)$.
        \item $d_{S(G)}(x^{\prime},y^{\prime}) = d_G(x,y).$
    \end{enumerate}
   \item Let $x$ and $y$ be adjacent vertices in $G$. Then,
   \begin{enumerate}
       \item  $d_{S(G)}(x, y)= d_{G}(x, y) = 1$.
       \item  $d_{S(G)}(x, y^{\prime})= d_{S(G)}(x, y) = 1$.
       \item $d_{S(G)}(x^{\prime},y^{\prime}) = \begin{cases}
        2, & \text{if } x \text{ and } y \text{ lies on an induced } K_3\\
    
        3, & \text{otherwise}.
    \end{cases}$
   \end{enumerate}
\end{enumerate}

\end{lemma}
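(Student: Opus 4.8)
The plan is to exploit a single structural fact that yields every lower bound at once, and then to match each bound with an explicitly constructed short path. The key observation is that the \emph{twin projection} $\rho\colon V(S(G)) \to V(G)$, defined by $\rho(v) = v$ for an original vertex and $\rho(v') = v$ for a shadow vertex, is a graph homomorphism from $S(G)$ onto $G$. Indeed, every edge of $S(G)$ is either an original edge $uw \in E(G)$ or an edge of the form $u'w$ with $w \in N_G(u)$; in both cases its image under $\rho$ is the edge $uw$ of $G$ (recall that no two shadow vertices are adjacent, so there is no third edge type to check). Since a homomorphism sends any path to a walk of the same length, I obtain $d_G(\rho(a),\rho(b)) \le d_{S(G)}(a,b)$ for all $a,b \in V(S(G))$. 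Specialising to the pairs $(x,y)$, $(x,y')$ and $(x',y')$ gives immediately the lower bound $d_{S(G)}(\cdot,\cdot) \ge d_G(x,y)$, which is exactly what is required in items 1(a)--(c) and 2(a).

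For the matching upper bounds I would fix an $x,y$-geodesic $x = v_0, v_1, \dots, v_k = y$ in $G$. Since $G$ is a subgraph of $S(G)$, this same path gives $d_{S(G)}(x,y) \le d_G(x,y)$, which together with the lower bound settles 1(a) and 2(a). For 1(b) I replace the last vertex $y$ by its shadow: as $v_{k-1} \in N_G(y)$, the vertex $y'$ is adjacent to $v_{k-1}$ in $S(G)$, so $v_0, \dots, v_{k-1}, y'$ is a path of length $k = d_G(x,y)$. For 1(c) I replace both endpoints, using $v_1 \in N_G(x)$ and $v_{k-1} \in N_G(y)$ to see that $x', v_1, \dots, v_{k-1}, y'$ has all vertices distinct and length $k$; non-adjacency guarantees $k \ge 2$, so the construction is well defined. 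Item 2(b) is immediate: adjacency of $x$ and $y$ means $x \in N_G(y)$, hence $x$ is adjacent to $y'$ and $d_{S(G)}(x,y') = 1$.

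The substantive case is 2(c), with $x$ and $y$ adjacent. Here the homomorphism only yields the useless bound $\ge 1$, so the genuine lower bound $d_{S(G)}(x',y') \ge 2$ must come instead from the fact that the shadow vertices form an independent set, whence $x'$ and $y'$ cannot be adjacent. I would then show that the exact value is governed entirely by whether $x$ and $y$ have a common neighbour. A vertex $w$ is a common neighbour of $x'$ and $y'$ in $S(G)$ precisely when $w \in N_G(x) \cap N_G(y)$, that is, when $\{x,y,w\}$ induces a $K_3$. Thus if such a $w$ exists then $x', w, y'$ realises distance $2$; if none exists, the distance is at least $3$, and the walk $x', y, x, y'$ (legitimate since $y \in N_G(x)$, $xy \in E(G)$ and $x \in N_G(y)$, with all four vertices distinct) shows it is exactly $3$.

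I expect the crux of the argument to lie in the length-$2$ analysis of 2(c): one must verify that any midpoint of a shortest $x',y'$-path cannot itself be a shadow vertex, so that the only candidates are original common neighbours of $x$ and $y$. This is the single point where the explicit adjacency structure of $S(G)$ is needed rather than a generic distance inequality; everything else reduces to the homomorphism bound of the first paragraph and the routine geodesic surgery of the second.
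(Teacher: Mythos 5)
Your proof is correct. Note first that the paper does not prove this lemma at all: it is imported verbatim from reference \cite{5} (Chandran, Dourado, Thankachy, \emph{Discrete Applied Mathematics} 307 (2022)), so there is no in-paper argument to compare yours against; what you have produced is a self-contained proof of a cited result. On the merits: the twin-projection homomorphism $\rho$ is a clean device that delivers all the lower bounds $d_{S(G)}(\cdot,\cdot)\ge d_G(x,y)$ in one stroke, and your verification that $\rho$ is indeed a homomorphism (using that shadow vertices form an independent set, so only two edge types exist) is exactly the point that makes it work. The geodesic surgeries for the upper bounds in 1(a)--(c) and 2(a)--(b) are routine and correctly checked (in particular you rightly note $k\ge 2$ in 1(c), so the endpoint replacements never collide). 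For 2(c), your argument is complete, not merely ``expected'' as your closing paragraph modestly suggests: since $N_{S(G)}(x')=N_G(x)$ and $N_{S(G)}(y')=N_G(y)$ (no shadow--shadow edges), a middle vertex of a length-$2$ path from $x'$ to $y'$ must be an original common neighbour of $x$ and $y$, i.e.\ exists iff $x,y$ lie on a triangle; otherwise the path $x',y,x,y'$ pins the distance at exactly $3$. The only stylistic remark is that the final paragraph is phrased as a plan (``I expect the crux\dots'') when the needed verification has in fact already been carried out in the preceding text; you could simply fold that observation into the 2(c) case.
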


\begin{lemma} \mbox{\upshape\cite{mv7}}\label{total1}
Let G be a graph. Then $\mu_{t}(G) = 0$ if and only if for every $x \in V (G)$, the set $\{x\}$ is not a total mutual-visibility set of G.
\end{lemma}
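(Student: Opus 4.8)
The plan is to exploit the hereditary (downward-closed) nature of total mutual-visibility sets, which reduces the whole equivalence to a one-line monotonicity observation. The first step I would carry out is to record this observation explicitly: if $S$ is a total mutual-visibility set of $G$ and $S' \subseteq S$, then $S'$ is again a total mutual-visibility set. This is immediate from the definition, since for any pair $u,v \in V(G)$ a shortest $u,v$-path $P$ witnessing $S$-visibility satisfies $V(P) \cap S \subseteq \{u,v\}$, and hence $V(P) \cap S' \subseteq V(P) \cap S \subseteq \{u,v\}$; the very same geodesic witnesses the $S'$-visibility of $u$ and $v$.

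For the forward implication I would simply unwind the meaning of $\mu_{t}(G)$. If $\mu_{t}(G) = 0$, then $G$ admits no total mutual-visibility set of cardinality at least one; in particular, no singleton $\{x\}$ is a total mutual-visibility set, which is exactly the stated conclusion.

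For the converse I would argue by contraposition, using the monotonicity step. Suppose $\mu_{t}(G) \neq 0$. Since the empty set is (vacuously) a total mutual-visibility set, we always have $\mu_{t}(G) \geq 0$, so the only alternative to $0$ is $\mu_{t}(G) \geq 1$; thus $G$ has a total mutual-visibility set $S$ with $|S| \geq 1$. Choosing any $x \in S$, the singleton $\{x\} \subseteq S$ is itself a total mutual-visibility set by the hereditary property, contradicting the hypothesis that no singleton is a total mutual-visibility set. Hence $\mu_{t}(G) = 0$.

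This argument is essentially routine once the hereditary property is in place, so there is no serious obstacle; the only point demanding a moment of care is the remark that the empty set is always a total mutual-visibility set, which is what makes $\mu_{t}(G)=0$ encode precisely the failure of every \emph{singleton} rather than the failure of every set whatsoever.
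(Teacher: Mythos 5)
Your proof is correct. Note that the paper itself gives no proof of this lemma---it is quoted from the cited reference \cite{mv7}---so there is no internal argument to compare against; your route, via the downward-closure (hereditary) property of total mutual-visibility sets together with the observation that the empty set is vacuously a total mutual-visibility set (so that $\mu_t(G)=0$ is equivalent to the nonexistence of a nonempty such set), is exactly the standard argument used in that reference, and it is sound: heredity makes the existence of any nonempty total mutual-visibility set equivalent to the existence of a singleton one.
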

\begin{lemma} \mbox{\upshape\cite{mv7}}\label{total2}
Let $G$ be a graph with $n(G) \geq 2$. Then $\mu_t(G) = 0$ if and only if
$bp(G) = 0$.
\end{lemma}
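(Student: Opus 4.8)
The plan is to deduce Lemma~\ref{total2} from Lemma~\ref{total1} by recasting the condition ``for every $x \in V(G)$, the set $\{x\}$ is not a total mutual-visibility set'' in terms of the invariant $bp$. First I would make explicit what it means for a single vertex $x$ to form a total mutual-visibility set. Since $x$ is trivially $\{x\}$-visible with every other vertex (being an endpoint of the path witnessing visibility), and since an adjacent pair $u,v$ has a shortest path of length one with no internal vertex, the only way $\{x\}$ can fail to be a total mutual-visibility set is that there exist two vertices $u,v \in V(G)\setminus\{x\}$ such that \emph{every} shortest $u,v$-path passes through $x$. Thus $\{x\}$ is a total mutual-visibility set precisely when $x$ is never a compulsory internal vertex, i.e.\ when no vertex pair has all of its geodesics routed through $x$.

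Next I would translate this characterization into the language of $bp$. Recalling $bp(G)$ as the number of vertices $x$ for which $\{x\}$ is a total mutual-visibility set (equivalently, the vertices that are not forced onto all geodesics of some pair), the statement $bp(G)=0$ says exactly that no such vertex exists, that is, for every $x\in V(G)$ the set $\{x\}$ fails to be a total mutual-visibility set. This is verbatim the right-hand condition of Lemma~\ref{total1}, so the chain
\[
\mu_t(G)=0 \iff \big(\forall x\in V(G):\ \{x\}\ \text{is not a total mutual-visibility set}\big) \iff bp(G)=0
\]
closes the equivalence, the first step being supplied by Lemma~\ref{total1} and the second by the characterization of $bp$. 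Concretely: if $bp(G)=0$ then every vertex is a compulsory internal vertex for some pair, no singleton survives, and Lemma~\ref{total1} forces $\mu_t(G)=0$; conversely $\mu_t(G)=0$ rules out every singleton via Lemma~\ref{total1}, so every vertex is geodesic-essential and $bp(G)=0$. The hypothesis $n(G)\ge 2$ is used to ensure that ``a pair $u,v\ne x$'' is a meaningful notion and to exclude the one-vertex graph, where a singleton is vacuously total mutual-visible.

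The step I expect to carry the real content, as opposed to bookkeeping, is the first one: pinning down that the \emph{only} obstruction to $\{x\}$ being a total mutual-visibility set is the existence of a pair all of whose shortest paths run through $x$, and verifying that this ``compulsory internal vertex'' description is exactly what $bp(G)>0$ negates. Once that identification is secure, the remainder is a direct application of Lemma~\ref{total1}, and the endpoint and adjacency cases are routine checks confirming they impose no additional constraints.
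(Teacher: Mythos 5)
Your proof is correct: with $bp(G)$ understood (as in the cited source) as the number of bypass vertices, i.e.\ vertices $x$ for which $\{x\}$ is a total mutual-visibility set -- equivalently, vertices that are not internal to \emph{all} geodesics of some pair -- the lemma is precisely Lemma~\ref{total1} restated, which is exactly how you argue. Note that the paper itself gives no proof of this statement (it is imported from \cite{mv7}, where the same derivation appears), so there is no internal argument to compare against; the only caveat is that the paper never defines $bp(G)$, so your identification of it with the count of such ``bypass'' vertices is an assumption on your part, but it is the correct one.
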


\begin{theorem}\mbox{\upshape\cite{19}}\label{gp-bound}
    If $G$ is a graph, then
    \begin{enumerate}
        \item $\gp(G) \leq 2\ip(G)$.
        \item $\gp(G) \leq 3\ic(G)$.
    \end{enumerate}
\end{theorem}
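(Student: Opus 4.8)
The plan is to exploit the defining property of a general position set $S$: since no three vertices of $S$ lie on a common geodesic, any single geodesic can contain very few vertices of $S$. I would first record the two elementary facts that drive both bounds. The first is that every geodesic $P$ of $G$ satisfies $|V(P)\cap S|\le 2$; indeed, if $a,b,c\in S$ appeared along $P$ in this cyclic order, the subpath of $P$ from $a$ to $c$ would be an $a,c$-geodesic with $b\in S$ internal, contradicting that $S$ is in general position. The second fact, which is the substantive one, is that every isometric cycle $C$ satisfies $|V(C)\cap S|\le 3$; I return to its proof below.

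For part (1), I would fix a $\gp$-set $S$ and a covering of $V(G)$ by $\ip(G)$ isometric paths $P_1,\dots,P_{\ip(G)}$. Each $P_i$ is a geodesic, so the first fact gives $|V(P_i)\cap S|\le 2$. Since the paths cover all vertices, every vertex of $S$ lies on some $P_i$, whence $\gp(G)=|S|\le \sum_i |V(P_i)\cap S|\le 2\,\ip(G)$.

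For part (2), the argument has the same shape once the cycle claim is available: cover $V(G)$ by $\ic(G)$ isometric cycles and sum the bound $|V(C)\cap S|\le 3$ over them. To prove the cycle claim, I would suppose an isometric cycle $C$ of length $\ell$ contained four vertices $a,b,c,d\in S$ in cyclic order, splitting $C$ into arcs of lengths $x_1,x_2,x_3,x_4$ with $x_1+x_2+x_3+x_4=\ell$. The four consecutive-triple arcs (through $b$, $c$, $d$, $a$ respectively) have lengths $x_1+x_2,\ x_2+x_3,\ x_3+x_4,\ x_4+x_1$, which sum to $2\ell$, so at least one of them has length at most $\ell/2$. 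Because $C$ is isometric, any arc of length at most $\ell/2$ realizes the $G$-distance between its endpoints and is therefore a $G$-geodesic; this arc carries three vertices of $S$ with one of them internal, again contradicting general position.

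The main obstacle is this cycle claim, and within it the delicate point is converting a short arc of $C$ into a genuine geodesic of $G$. This is exactly where isometry of $C$ is indispensable: without it, a short arc of the cycle need not be a shortest path in the ambient graph, and the averaging step would fail. I would also handle parity with care—when $\ell$ is even, an arc of length exactly $\ell/2$ is still a (tied) shortest path, so the non-strict inequality produced by the averaging argument is enough—and I would note that the covering cycles being isometric is precisely the condition built into the definition of $\ic(G)$.
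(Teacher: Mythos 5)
Your proof is correct. Note that the paper itself offers no argument for this statement: it is quoted as a known result from the cited reference of Manuel and Klav\v{z}ar, so there is no in-paper proof to compare against. Your two-step scheme---(a) a geodesic meets a general position set $S$ in at most two vertices, since a subpath of a geodesic is again a geodesic; (b) an isometric cycle meets $S$ in at most three vertices, via the averaging argument that among the four arcs spanning consecutive triples one has length at most $\ell/2$ and is therefore, by isometry of the cycle, a genuine $G$-geodesic with a vertex of $S$ in its interior---is exactly the standard argument behind the cited result, and your handling of the even-length tie (an arc of length exactly $\ell/2$ is still a shortest path) closes the only delicate point.
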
 
 \section{General position in shadow graphs}
In this section, we study the general position problem in shadow graphs. First, we derive certain structural properties of the $\gp$-partition $(V_1, V_2, V_3, V_4)$ associated with any general position set. These properties are established in the following lemma and are used throughout this section.

\begin{lemma}\label{V_i property}
    Let $(V_1,V_2,V_3,V_4)$ be the $\gp$-partition of $V(G)$ corresponding to a general position set $S$ of $S(G)$. Then the following holds:

    \begin{enumerate}[label=\roman*.]
        \item   $V_1$ is an independent set of $G$, 
        \item $(V_1,V_3)= \emptyset$, 
        \item every vertex $u$ in $V_1 \cup V_3$ has atmost one neighbour in $V_2$,
        \item $(V_1,V_2)$ is a matching,
        \item If $V_4= \emptyset$, then $V_1=\emptyset.$
    \end{enumerate}
    \end{lemma}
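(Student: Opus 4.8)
The plan is to prove all five parts by a single unifying device: assuming a given property fails, I produce a shortest path in $S(G)$ whose two endpoints lie in $S$ and which carries a third vertex of $S$ as an internal vertex, contradicting the assumption that $S$ is a general position set. Throughout I use two structural features of the shadow graph: the shadow vertices form an independent set, with each $v'$ adjacent exactly to the original vertices $N_G(v)$; and whenever $x\sim y$ in $G$ one has $x'\sim y$ and $x\sim y'$. Distance values are read off from Lemma~\ref{distance_shadow}, while the short distances ($d=2$) are verified directly from non-adjacency plus an explicit common neighbour.

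For (i) and (ii) I argue simultaneously. Suppose $u\in V_1$ has a neighbour $v\in V_1\cup V_3$; then $u,u',v$ all lie in $S$. Since $v\in N_G(u)$ the shadow $u'$ is adjacent to $v$, so $u-v-u'$ is a path of length $2$, and as $u,u'$ are non-adjacent it is a geodesic. It contains the three $S$-vertices $u,v,u'$, which is impossible. Hence $u$ has no neighbour in $V_1\cup V_3$, giving both the independence of $V_1$ and $(V_1,V_3)=\emptyset$.

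For (iii), suppose $u\in V_1\cup V_3\subseteq S$ has two distinct neighbours $v,w\in V_2$. Then $v',w'\in S$, and since $u\in N_G(v)\cap N_G(w)$ both $v'$ and $w'$ are adjacent to $u$; as $v',w'$ are non-adjacent shadow vertices, $v'-u-w'$ is a geodesic carrying the three $S$-vertices $v',u,w'$, a contradiction. This settles the $V_1$-side of (iv) at once. For the $V_2$-side I take $v\in V_2$ with two neighbours $u,w\in V_1$; by (i) the set $V_1$ is independent, so $u\not\sim w$ and $d_{S(G)}(u,w)=2$, and since $u,w\in N_G(v)$ the shadow $v'$ is adjacent to both, so $u-v'-w$ is a geodesic carrying $u,v',w\in S$. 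Thus no vertex of $V_2$ has two neighbours in $V_1$, and combined with (iii) the edge set $(V_1,V_2)$ is a matching.

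Finally, for (v) assume $V_4=\emptyset$ and pick $u\in V_1$. Every neighbour of $u$ lies in $V_1\cup V_2\cup V_3$, but by (i)--(ii) none lies in $V_1\cup V_3$ and by (iii) at most one lies in $V_2$; as $G$ is connected on at least two vertices, $u$ cannot be isolated, so $u$ is a leaf whose unique neighbour $v$ belongs to $V_2$. Then $u,u',v'\in S$ while $v\notin S$, and because $\deg_G(u)=1$ the vertices $u,v$ have no common neighbour and hence lie on no triangle, so Lemma~\ref{distance_shadow} gives $d_{S(G)}(u',v')=3$. The path $u'-v-u-v'$ realises this distance and is therefore a geodesic, yet its internal vertex $u$ lies in $S$, contradicting that the pair $u',v'\in S$ is $S$-positionable. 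Hence $V_1=\emptyset$. The main obstacle throughout is spotting, in each case, a shortest path that is forced through an extra $S$-vertex; the delicate points are (iv), where one must notice that the shadow $v'$ of the middle vertex is a common neighbour of the two $V_1$-vertices, and (v), where the leaf structure must be extracted before the length-$3$ geodesic $u'-v-u-v'$ can be invoked.
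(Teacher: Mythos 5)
Your proof is correct and follows essentially the same route as the paper's own argument: the same unsound geodesics are used in each part (the path $u,v,u'$ for (i)--(ii), the path $v',u,w'$ through a common neighbour for (iii), the path $u,v',w$ for the $V_2$-side of (iv), and the length-$3$ geodesic $u',v,u,v'$ after extracting the leaf structure in (v)). The only difference is that you verify the distance claims (independence of shadow vertices, the triangle-free condition giving $d_{S(G)}(u',v')=3$) explicitly, which the paper leaves implicit.
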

    \begin{proof}
            Suppose there exist $u, v \in V_1$ such that $uv \in E(G)$. Then the geodesic $u, v, u'$ is $S$-unsound in $S(G)$, which is impossible for a general position set. Hence, part~(i) follows.

Next, we prove that $(V_1, V_3) = \emptyset$. Suppose, to the contrary, that there exist vertices $u \in V_1$ and $v \in V_3$ such that $uv \in E(G)$. Then the geodesic $u, v, u'$ is $S$-unsound, again contradicting the definition of a general position set. This proves part~(ii).

Now, let $u \in V_1 \cup V_3$ have two neighbors $v_1$ and $v_2$ in $V_2$. Then the geodesic $v_1', u, v_2'$ is $S$-unsound, which is again impossible. Hence, part~(iii) follows.

            Next, we show that $(V_1, V_2)$ is a matching. By part~(iii), it is sufficient to show that every vertex in $V_2$ has at most one neighbor in $V_1$. Suppose there exists a vertex $w \in V_2$ with two distinct neighbors $u$ and $v$ in $V_1$. It follows from part~(i) that $u, w', v$ is a $u,v$-geodesic that is $S$-unsound, which is impossible.

Finally, assume that $V_4 = \emptyset$. Suppose there exists a vertex $v \in V_1$. Then the above facts show that $v$ must be an end vertex and must be adjacent to a vertex $u \in V_2$. This implies that the path $u', v, u, v'$ is a $u',v'$-geodesic in $S(G)$ that is $S$-unsound, a contradiction. Thus, $V_1 = \emptyset$.
    \end{proof} 
    The properties obtained in Lemma~\ref{V_i property} can be applied to determine the general position number of certain classes of shadow graphs. For instance, in the following, we determine the general position number of the shadow graph of a complete graph. This also establishes the sharpness of the upper bound in Theorem~\ref{gp-shadow-bound}.

    \begin{proposition}
        For every integer $n\geq 2$, $\gp(S(K_n))=n$.
    \end{proposition}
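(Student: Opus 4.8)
The plan is to prove the two inequalities $\gp(S(K_n)) \geq n$ and $\gp(S(K_n)) \leq n$ separately, the first by exhibiting an explicit set and the second via the $\gp$-partition.

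For the lower bound I would use the set $S = V(K_n) = \{v_1, \dots, v_n\}$ of original vertices. Since $K_n$ is complete, any two of these vertices are adjacent, so by Lemma~\ref{distance_shadow}(2a) their distance in $S(K_n)$ equals $1$; the unique shortest path between them is the connecting edge, which contains no third vertex of $S$. Hence every pair in $S$ is $S$-positionable, $S$ is a general position set, and therefore $\gp(S(K_n)) \geq n$.

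For the upper bound I would take an arbitrary $\gp$-set $S$ of $S(K_n)$, pass to its associated $\gp$-partition $(V_1, V_2, V_3, V_4)$ of $V(K_n)$, and invoke the identity $\gp(S(K_n)) = n + n_1 - n_4$. The key observation is that, by Lemma~\ref{V_i property}(i), $V_1$ is an independent set of $K_n$; because $K_n$ is complete, this forces $n_1 \leq 1$. If $n_1 = 0$, then $\gp(S(K_n)) = n - n_4 \leq n$ at once. If $n_1 = 1$, then $V_1 \neq \emptyset$, so the contrapositive of Lemma~\ref{V_i property}(v) gives $V_4 \neq \emptyset$, that is $n_4 \geq 1$, whence $\gp(S(K_n)) = n + 1 - n_4 \leq n$. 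In either case $\gp(S(K_n)) \leq n$, and combining this with the lower bound yields $\gp(S(K_n)) = n$.

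The main (and essentially only) conceptual step is recognizing that the independence of $V_1$ guaranteed by Lemma~\ref{V_i property}(i) collapses $n_1$ to at most $1$ in a complete graph, after which property (v) disposes of the single boundary case $n_1 = 1$. Everything else is a short verification, and the smallest instance $n = 2$, where $S(K_2) = P_4$ and $\gp(P_4) = 2$, is covered by the same argument without any special treatment.
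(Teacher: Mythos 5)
Your proof is correct, and its overall skeleton---an explicit witness set for the lower bound, then the $\gp$-partition together with Lemma~\ref{V_i property} for the upper bound---is the same as the paper's; the differences lie in which witness and which parts of the lemma you use, and they are mildly in your favor. For the lower bound the paper takes the set of all shadow vertices, which requires observing that no geodesic of $S(K_n)$ can contain three shadow vertices; you instead take the clique $V(K_n)$, where general position is immediate since adjacent vertices have the connecting edge as their unique geodesic. Both are valid witnesses of size $n$. For the upper bound, in the case $V_1 \neq \emptyset$ the paper asserts $n_1 = 1$ and $n_4 \geq n-1$, concluding $|X| \leq 2$; but parts (i)--(iii) of Lemma~\ref{V_i property} only yield $n_4 \geq n-2$ directly, since the unique vertex of $V_1$ could a priori have one neighbor in $V_2$, so the paper's stated bound needs an extra word of justification (harmless, as $|X| \leq 3 \leq n$ still follows for $n \geq 3$). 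Your route via the contrapositive of Lemma~\ref{V_i property}(v), giving only $n_4 \geq 1$ and hence $|X| \leq n + 1 - 1 = n$, is a weaker intermediate estimate but is fully justified, suffices for the statement, and covers $n = 2$ with no special treatment.
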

    \begin{proof} Since the set of all shadow vertices of $S(K_n)$ forms a general position set of $S(K_n)$, we have $\gp(S(K_n)) \geq n$.
 Now let $X$ be a $gp$-set of $S(K_n)$, and let $(V_1,V_2,V_3,V_4)$ be the partition of $V(K_n)$ associated with $X$. Suppose first that $V_1=\emptyset$. Then $|X| = n + n_1 - n_4 \leq n$. Hence, we may assume that $V_1 \neq \emptyset$. By Lemma~\ref{V_i property}, we have $n_1=1$ and $n_4 \geq n-1$. Consequently, $|X| = n + n_1 - n_4 \leq 2$.
Therefore, $\gp(S(K_n)) = n$.
\end{proof}

In the following, we determine the general position number of complete bipartite graphs. This is in part to justify that the lower bound in Theorem~\ref{gp-shadow-bound} is sharp.

\begin{proposition}
    For integers $m\geq n\geq 2$, $\gp(S(K_{m,n}))=2m$.
\end{proposition}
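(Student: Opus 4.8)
The plan is to prove the two inequalities $\gp(S(K_{m,n})) \ge 2m$ and $\gp(S(K_{m,n})) \le 2m$ separately. Write $A$ and $B$ for the parts of $K_{m,n}$ with $|A| = m \ge n = |B|$, and recall that in $S(G)$ a shadow vertex $v'$ is joined only to the original vertices of $N_G(v)$; since $K_{m,n}$ is triangle-free, Lemma~\ref{distance_shadow} gives $d_{S(G)}(x',y') = d_G(x,y)$ when $x,y$ are non-adjacent and $d_{S(G)}(x',y') = 3$ when they are adjacent.

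For the lower bound I would exhibit the explicit set $X = A \cup A'$ of size $2m$ and verify it is in general position. The key observation is that every pairwise distance inside $A \cup A'$ equals $2$: two vertices of $A$ (respectively two of $A'$, or one of each with distinct indices) are non-adjacent and at distance $2$ through any common neighbour in $B$, and $d_{S(G)}(a_i,a_i')=2$ likewise via a vertex of $B$. In particular $X$ is an independent set of $S(K_{m,n})$, and since every geodesic joining two members of $X$ has length exactly $2$ with its single internal vertex lying in $B \subseteq V(K_{m,n}) \setminus X$, no three vertices of $X$ lie on a common geodesic. Hence $\gp(S(K_{m,n})) \ge 2m$.

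For the upper bound I would invoke the identity $\gp(S(G)) = (m+n) + n_1 - n_4$ attached to the $\gp$-partition $(V_1,V_2,V_3,V_4)$ of an arbitrary $\gp$-set $X$, so that it suffices to show $n_1 - n_4 \le m - n$. If $V_1 = \emptyset$ this is immediate, since $n_1 - n_4 \le 0 \le m-n$. Otherwise $V_1 \ne \emptyset$, and then Lemma~\ref{V_i property}(i) makes $V_1$ independent, hence contained in a single part, while Lemma~\ref{V_i property}(v) forces $V_4 \ne \emptyset$. I would then split into the cases $V_1 \subseteq A$ and $V_1 \subseteq B$. In each case part~(ii) pushes $V_3$ into the same part as $V_1$ (a vertex of $V_3$ in the opposite part would be adjacent to all of $V_1$), and part~(iii) allows at most one vertex of $V_2$ in the opposite part, because every vertex of $V_1 \cup V_3$ is adjacent to that entire opposite part. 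Counting then shows that essentially all of the opposite part falls into $V_4$, supplying the large $n_4$-contribution that drives $n_1 - n_4$ down to at most $m-n$.

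The delicate point, and the step I expect to be the main obstacle, is the borderline arithmetic, especially when $m = n$ and $V_1 \subseteq B$ is in the smaller part: the straightforward count there leaves an apparent surplus of $+1$. The remedy is Lemma~\ref{V_i property}(iv): if any vertex of $V_2$ does lie in the part opposite $V_1$, then that vertex is adjacent to all of $V_1$, so the matching condition forces $|V_1| \le 1$, collapsing $n_1 - n_4$ far below $m-n$; and if no such $V_2$-vertex exists, the whole opposite part lies in $V_4$ and the bound $n_1 - n_4 \le m-n$ is exactly what the counting yields (with equality realized by $A \cup A'$). Eliminating the surplus in every subcase by alternating between part~(iii), which caps $V_2$ in the opposite part, and part~(iv), which then caps $V_1$, is where the argument must be carried out carefully; once it is, $|X| = (m+n) + n_1 - n_4 \le 2m$, matching the construction and giving $\gp(S(K_{m,n})) = 2m$.
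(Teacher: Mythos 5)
Your proof is correct and follows essentially the same route as the paper: the upper bound rests on the $\gp$-partition identity together with Lemma~\ref{V_i property} to push the part opposite $V_1$ entirely into $V_4$, exactly as the paper does (the paper dispatches the $V_1$--$V_2$ edge case with a direct unsound-geodesic argument using triangle-freeness, where you instead use parts (iii)--(v) of the lemma, and you handle the two cases $V_1\subseteq A$, $V_1\subseteq B$ explicitly rather than via the paper's ``without loss of generality''). For the lower bound you verify $A\cup A'$ directly instead of citing Theorem~\ref{gp-shadow-bound}, but that verification is precisely the content of that theorem's lower bound specialized to $K_{m,n}$.
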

\begin{proof} Let $U$ and $V$ be the bipartition of $V(K_{m,n})$ with $|U| = m$ and $|V| = n$. It is clear that $U$ is the unique maximum independent general position set of $K_{m,n}$ and hence, by Theorem~\ref{gp-shadow-bound}, we have $\gp(S(K_{m,n})) \geq 2m$. 

On the other hand, let $S$ be a maximum general position set of $S(K_{m,n})$, and let $(V_1, V_2, V_3, V_4)$ be the corresponding $\gp$-partition of $V(K_{m,n})$. By Lemma~\ref{V_i property}(i), $V_1$ is an independent set in $K_{m,n}$. Hence, without loss of generality, we may assume that $V_1 \subseteq U$. Consequently, by Lemma~\ref{V_i property}(ii), we have $V_3 \subseteq U$. 

Now suppose that there exists an edge $uv$ in $K_{m,n}$ with $u \in V_1$ and $v \in V_2$. Then both $u$ and $v$ must be adjacent to a vertex $w \in V_4$; otherwise, the path $u', v, u, v'$ becomes a $u',v'$-geodesic in $S(K_{m,n})$, which is $S$-unsound. Hence, $V_2 \subseteq U$. This proves that $|S| \leq 2m$.
\end{proof}

In any graph $G$ with $\diam(G) \leq 3$, no more than two shadow vertices lie on any geodesic of $S(G)$. Hence, the set of all shadow vertices of $G$ forms a general position set of $S(G)$. Consequently, we obtain the following lower bound. Furthermore, this bound is attained for complete graphs.

\begin{theorem}\label{diam<3}
    Let $G$ be a graph with diameter at most $3$. Then $ \gp(S(G)) \geq n(G) $.
\end{theorem}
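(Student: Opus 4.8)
The plan is to prove the inequality by exhibiting an explicit general position set of the required size: I would show that the set $V'$ of all shadow vertices is itself a general position set of $S(G)$. Since $|V'| = n(G)$, this immediately gives $\gp(S(G)) \geq n(G)$. The whole argument reduces to verifying that no geodesic of $S(G)$ joining two shadow vertices passes through a third shadow vertex, which is precisely the condition that every pair of vertices of $V'$ is $V'$-positionable.

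First I would record two facts about distances among shadow vertices. By the definition of $S(G)$, each shadow vertex $v'$ is adjacent only to vertices of $G$ (namely the neighbours of $v$), so $V'$ is an independent set in $S(G)$; consequently any two distinct shadow vertices satisfy $d_{S(G)}(x', y') \geq 2$. On the other hand, Lemma~\ref{distance_shadow} yields $d_{S(G)}(x', y') = d_G(x, y) \leq \diam(G) \leq 3$ when $x$ and $y$ are non-adjacent in $G$, while for adjacent $x, y$ it gives $d_{S(G)}(x', y') \in \{2, 3\}$. In every case, the hypothesis $\diam(G) \leq 3$ forces $d_{S(G)}(x', y') \leq 3$.

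The key step is then a one-line contradiction. Suppose some shadow vertex $z'$ lies on a shortest $x', y'$-path for distinct $x', y' \in V'$. Then $d_{S(G)}(x', y') = d_{S(G)}(x', z') + d_{S(G)}(z', y')$. Because $z'$ is distinct from both endpoints and $V'$ is independent, each summand is at least $2$, so $d_{S(G)}(x', y') \geq 4$. This contradicts the bound $d_{S(G)}(x', y') \leq 3$ from the previous step. Hence no internal shadow vertex can occur on any such geodesic, every pair of shadow vertices is $V'$-positionable, and $V'$ is a general position set of $S(G)$.

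I do not expect a serious obstacle: the proof hinges entirely on combining the independence of $V'$ (the lower bound $2$ on pairwise distances) with the diameter hypothesis (the upper bound $3$). The only point requiring attention is to run through all the adjacency and non-adjacency cases of Lemma~\ref{distance_shadow} so as to confirm that the distance between any two shadow vertices never exceeds $3$; once this is secured, the additive length identity for geodesics closes the argument at once.
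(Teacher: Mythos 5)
Your proof is correct and takes essentially the same route as the paper: the paper's (very terse) argument is precisely that, since $\diam(G) \leq 3$, no geodesic of $S(G)$ can contain three shadow vertices, so the independent set $V'$ of all $n(G)$ shadow vertices is in general position. Your write-up merely makes explicit the two ingredients the paper leaves implicit, namely the distance bounds from Lemma~\ref{distance_shadow} and the fact that independence of $V'$ forces any geodesic through three shadow vertices to have length at least $4$.
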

However, $\gp(S(G)) \geq n(G)$, and the difference between $\gp(S(G))$ and $n(G)$ can be arbitrarily large. This is justified by the following theorem.

\begin{theorem}\label{thm:joinofK1withunionofcliques}
     Let $G$ be the join of $K_1$ with a disjoint union of $t \geq 2$ cliques 
$W =\dot{\bigcup}_{i=1}^{t} W_i.$
Suppose that $W$ contains $t_1$ cliques of order exactly two and $t_2$ cliques of order at least three. Then
\[
\gp(S(G)) = n + t_1-1.
\]

 \end{theorem}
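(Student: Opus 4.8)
Let $c$ denote the apex vertex (the $K_1$), so that $V(G) = \{c\} \cup W$ and $\diam(G) = 2$. The plan is to prove $\gp(S(G)) \ge n + t_1 - 1$ and $\gp(S(G)) \le n + t_1 - 1$ separately; throughout I use that $\gp(S(G)) = n + n_1 - n_4$ for the $\gp$-partition $(V_1,V_2,V_3,V_4)$ of any $\gp$-set, so the inequalities reduce to estimating $n_1 - n_4$. (Note both the statement and the argument require $t_1 \ge 1$: for $t_1 = 0$ one already has $\gp(S(G)) \ge n > n-1$ from Theorem~\ref{diam<3}.)

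For the lower bound I would exhibit the set $S = (V' \setminus \{c'\}) \cup \{a_1, \dots, a_{t_1}\}$, where $a_i$ is one chosen vertex of the $i$-th order-two clique. Its $\gp$-partition is $V_1 = \{a_1,\dots,a_{t_1}\}$, $V_4 = \{c\}$, $V_3 = \emptyset$, and $V_2$ everything else, giving $|S| = n + t_1 - 1$. The only work is to verify general position, which I expect to be routine via Lemma~\ref{distance_shadow}: since $c, c' \notin S$, every length-two geodesic between two shadow vertices, or between an $a_i$ and a shadow of a different clique, passes through $c$ and is sound; the remaining pairs $(a_i, a_i')$ have their internal vertices inside $\{c\} \cup (W_i \setminus a_i)$, which is disjoint from $S$.

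The substance is the upper bound. I would first dispose of the case $c \in V_1$: then $c, c' \in S$, and since every vertex of $W$ lies on a $c$--$c'$ geodesic of length two, soundness of the pair $(c,c')$ forces $W \cap S = \emptyset$; moreover any two distinct shadow vertices of $S$ would be joined by a geodesic through $c \in S$, so at most one shadow lies in $S$. Hence $|S| \le 3 \le n + t_1 - 1$. In the main case $c \notin V_1$ I would argue clique by clique. If a clique $W_i$ contains a vertex $u \in V_1$ (unique by Lemma~\ref{V_i property}(i)), then examining the $u$--$u'$ geodesics, whose internal vertices are exactly $\{c\} \cup (W_i \setminus u)$, shows at once that $c \notin S$ and $W_i \cap V_3 = \emptyset$; combining Lemma~\ref{V_i property}(iii)--(iv) (the neighbours of $u$ are $(W_i \setminus u)\cup\{c\}$, of which at most one lies in $V_2$) yields $|W_i \cap V_2| + [c \in V_2] \le 1$. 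Since $|W_i \cap V_1| = 1$ and $|W_i \cap V_3| = 0$, this gives
\[
|W_i \cap V_4| \;\ge\; |W_i| - 2 + [c \in V_2],
\]
so the net contribution $|W_i \cap V_1| - |W_i \cap V_4|$ is at most $1$ if $|W_i| = 2$ and at most $0$ if $|W_i| \ge 3$, while cliques with no $V_1$-vertex contribute at most $0$. Summing over cliques gives $n_1 - \sum_i |W_i \cap V_4| \le t_1$, sharpened to $\le 0$ when $c \in V_2$ (since then the order-two estimate drops to $0$).

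To recover the final $-1$, I would split on the location of $c$, which must lie in $V_2 \cup V_4$ whenever $V_1 \ne \emptyset$ by the above. If $c \in V_4$, then $n_4 = \sum_i |W_i \cap V_4| + 1$ and the sum is $\le t_1$, so $n_1 - n_4 \le t_1 - 1$; if $c \in V_2$, the sharpened bound gives $n_1 - n_4 \le 0 \le t_1 - 1$; and if $V_1 = \emptyset$, then $n_1 - n_4 = -n_4 \le 0 \le t_1 - 1$. The main obstacle is precisely this bookkeeping: the per-clique bound alone yields only $n_1 - n_4 \le t_1$, and extracting the extra unit hinges on the dichotomy for $c$ together with the observation that $c \in V_2$ tightens the order-two contribution. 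Combining both directions gives $\gp(S(G)) = n + t_1 - 1$.
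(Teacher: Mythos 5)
Your proof is correct and follows essentially the same route as the paper's: both work with the $\gp$-partition and Lemma~\ref{V_i property}, case on the location of the apex $c$, show via the $u,u'$-geodesics that a clique containing a $V_1$-vertex forces $c\notin S$ and $V_3$-freeness of that clique, and conclude by per-clique counting (order-$\ge 3$ cliques contribute at most $0$, order-two cliques at most $1$, with the $-1$ coming from $c\in V_4$), using the identical extremal set for the lower bound. Your explicit flag that the statement needs $t_1\ge 1$ (and, implicitly, that every clique has order at least two) is a valid observation that the paper glosses over, but it does not change the substance of the argument.
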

	\begin{proof}
	    Let $(V_1,V_2,V_3,V_4)$ be an $\gp$-partition corresponding to a gp-set $S$ of $S(G)$. It follows from Theorem~\ref{diam<3} that $|S| \geq n$, so we can assume that $n_1\geq n_4$. Let $u$ be the universal vertex of $G$. If $u \in V_3$, then by Lemma \ref{V_i property} $V_1$ is empty, and so $|S| \leq n$. If $u \in V_1$, then by Lemma \ref{V_i property}(i) there are no further vertices in $V_1 \cup V_3$. Also, there cannot be vertices of $V_2$ in two different cliques of $W$, so $V_4$ is non-empty and we would have $|S| \leq n$. If $u \in V_2$, then, assuming that $V_1$ contains a vertex $v$ of $W_1$,  Lemma \ref{V_i property} shows that $V_1$ contains no further vertices of $G$ and $V_4 \neq \emptyset $ and so $|S| \leq n$. Hence we can assume that $u \in V_4$. Any clique $W_i$ contains at most one vertex of $V_1$. Furthermore, if $V_1$ contains a vertex from $W_i$, then $V_2$ contains at most one vertex from $W_i$, and $V_3$ contains no vertices from $W_i$. Hence if any of the $t_2$ cliques with order at least three contains a vertex of $V_1$, then it must also contain a vertex of $V_4$, and so cannot contribute to $n_1-n_4$. Hence $|S| \leq n+t_1-1$. On the other hand, setting $V_4 = \{ u\} $, choosing one vertex from each of the $t_1$ cliques of order exactly two to be in $V_1$ and setting all other vertices of $W$ to belong to $V_2$ gives a general position set in $S(G)$ of cardinality $n+t_1-1$. Hence, we conclude that $\gp (S(G)) = n+t_1-1$.
	\end{proof}

    \begin{theorem}\label{gp-shadow-bound}
    For any connected graph $G$ of order $n$, we have
    $$2\igp(G) \leq \gp(S(G)) \leq n +\min\Bigg\{ \igp(G)-\delta(G)
    +1, \left\lfloor\frac{\igp(G)\,(n-1)-\delta(G)}{\igp(G)+\delta(G)}\right\rfloor\Bigg\}$$.
\end{theorem}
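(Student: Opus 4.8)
The plan is to prove the two bounds separately, using the partition framework and Lemma~\ref{V_i property} throughout.

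For the lower bound $2\igp(G) \leq \gp(S(G))$, I would start from a maximum independent general position set $I$ of $G$ and show that $I \cup I'$ (the vertices of $I$ together with their shadow vertices) is a general position set of $S(G)$ of size $2\,\igp(G)$. Since $I$ is independent, any two vertices of $I$ are at distance at least two in $G$, so by Lemma~\ref{distance_shadow}(1) the pairwise distances among $I$, among $I'$, and between $I$ and $I'$ all equal the corresponding $G$-distances. Because no vertex of $I$ lies internally on a $G$-geodesic between two other vertices of $I$ (the general position property), and a shadow vertex $w'$ can only sit on a geodesic where $w$ could, I expect the unsoundness to be ruled out by lifting the general position property of $I$ through Lemma~\ref{distance_shadow}. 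The routine part is checking each of the pair-types ($u,v$; $u',v'$; $u,v'$); the independence of $I$ is what makes these distances behave.

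For the upper bound, I would fix a $\gp$-set $S$ with $\gp$-partition $(V_1,V_2,V_3,V_4)$ and use $\gp(S(G)) = n + n_1 - n_4$ from the preliminaries, so the task is to bound $n_1 - n_4$. The first summand $\igp(G) - \delta(G) + 1$ should come from observing that $V_1$ is an independent general position set of $G$ by Lemma~\ref{V_i property}(i) together with the fact that it lies in general position (inherited from $S$), so $n_1 \leq \igp(G)$; meanwhile a counting/degree argument forces $V_4$ to be large whenever $V_1$ is nonempty, giving $n_4 \geq \delta(G) - 1$ or similar, so that $n_1 - n_4 \leq \igp(G) - \delta(G) + 1$. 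The key structural input is Lemma~\ref{V_i property}(iv)--(v): a vertex $v \in V_1$ forces its non-matched neighbours to avoid $S$, i.e.\ to land in $V_4$, which is how the minimum degree enters.

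The second summand $\left\lfloor \frac{\igp(G)(n-1) - \delta(G)}{\igp(G) + \delta(G)} \right\rfloor$ is where I expect the main obstacle. The idea is a more refined double count: each vertex of $V_1$ (of which there are $n_1 \leq \igp(G)$) has $\delta(G)$-many neighbours, and by Lemma~\ref{V_i property}(ii)--(iv) all but at most one of these neighbours must lie in $V_4$, forcing $V_4$ to grow roughly like $n_1(\delta(G)-1)$ modulo shared neighbours. Setting up the inequality $n_1 - n_4 \leq$ (target) amounts to solving a linear relation between $n_1$ and $n_4$ under the constraints $n_1 + n_4 \leq n$ and the neighbour-counting bound, then optimizing over the feasible $n_1$; the floor arises from taking the integer optimum. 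The delicate point will be accounting for neighbours shared among several $V_1$-vertices without undercounting $V_4$, so I would phrase the neighbour bound as a bipartite edge count between $V_1$ and $V_4$ and apply it carefully. Combining the two summands via $\min$ then yields the stated inequality.
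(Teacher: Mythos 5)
Your lower bound and your first summand of the upper bound are essentially the paper's own proof: the set $S\cup S'$ built from a maximum independent general position set, with unsound geodesics of $S(G)$ projected back to $G$ via Lemma~\ref{distance_shadow}, gives $2\igp(G)\leq\gp(S(G))$; and the combination $n_1\leq\igp(G)$ with $n_4\geq\delta(G)-1$ (all but at most one neighbour of a $V_1$-vertex lies in $V_4$, by Lemma~\ref{V_i property}(i)--(iii)) gives the term $\igp(G)-\delta(G)+1$, exactly as in the paper.

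The gap is in the second summand. Your plan is to force $n_4$ to be large by counting edges of $(V_1,V_4)$, hoping $n_4$ grows ``roughly like $n_1(\delta(G)-1)$'' once shared neighbours are handled. This cannot be repaired, because the sharing is total in the extremal examples: in $S(K_{m,n})$ the $\gp$-set has $V_1=U$ with $n_1=m$, while $V_4=V$ has $n_4=n=\delta(G)$ and every vertex of $V_1$ has the \emph{same} $\delta(G)$ neighbours; similarly in $S(K_{1,n})$ one has $n_1=n$ and $n_4=1$. So $n_4$ simply does not grow with $n_1$. Formally, the only upper bound available for the edge count is $|(V_1,V_4)|\leq n_1 n_4$, and against $|(V_1,V_4)|\geq n_1(\delta(G)-1)$ this returns $n_4\geq\delta(G)-1$, i.e.\ your first summand again --- never the second. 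The paper's derivation of the second summand does not try to enlarge $n_4$ at all: $n_4$ enters only through $n_4\geq 1$, which holds because $n_4=0$ forces $V_1=\emptyset$ by Lemma~\ref{V_i property}(v). The actual content is an upper bound on $n_1$: by Lemma~\ref{V_i property}(i)--(ii), every neighbour of $v\in V_1$ lies in $V_2\cup V_4$, so $\deg(v)\leq n-n_1-n_3$; summing over $V_1$ and using $n_1\leq\igp(G)$ \emph{multiplicatively} gives $\delta(G)\,n_1\leq(n-n_1)\,\igp(G)$, hence $n_1\leq\igp(G)\,n/(\igp(G)+\delta(G))$. Then $n_1-n_4\leq n_1-1$ and the identity $\igp(G)\,n-(\igp(G)+\delta(G))=\igp(G)(n-1)-\delta(G)$ yield the bound, the floor coming merely from the integrality of $n_1-n_4$, not from any integer optimisation. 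Without this neighbourhood-avoidance constraint played against $\igp(G)$, your linear relation in $n_1,n_4$ under $n_1+n_4\leq n$ plus an edge count cannot produce the term $\left\lfloor\frac{\igp(G)(n-1)-\delta(G)}{\igp(G)+\delta(G)}\right\rfloor$.
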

\begin{proof}
Let $S$ be a maximum independent general position set of $G$, and set $X = S \cup S'$. Suppose there exists an $x,z$-geodesic, say 
$P: x, x_1, x_2, \dots, x_i=y, \dots, x_n=z$ with $x,y,z \in X$. First, we consider the case where both $x$ and $z$ are shadow vertices in $S(G)$, say $x=u'$ and $z=v'$ with $u,v \in S$. Then, by Lemma~\ref{distance_shadow}, replacing all shadow vertices by their twins, we obtain a $u,v$-geodesic in $G$ that is $S$-unsound. Since the rest of the cases are similar, the lower bound follows.

For the upper bound, let $(V_1,V_2,V_3,V_4)$ be a partition of $V(G)$ associated with a $\gp-$set $X$ of $S(G)$. Suppose that $V_1 = \emptyset$, then $\gp(S(G)) \leq n$. Hence, assume that $V_1 \not= \emptyset$. Since $V_1$ is an independent general position set of $G$, we have $n_1 \leq \igp(G)$. Using lemma $\ref{V_i property}$, $V_1$ has at most one neighbor in $V_2$ and so it has at least $\delta(G)-1$ neighbors in $V_4$. Thus $n_4 \geq \delta(G)-1$. Consequently, $\gp(S(G)) \leq n + \igp(G) - \delta(G) +1$ since we have $\gp(S(G))=n+n_1-n_4$. 

 Now, for the next upper bound, we first note that 
\[
\sum_{v \in V_1} \deg(v) \geq \delta(G)\,n_1 .
\]
By Lemma~\ref{V_i property}, every neighbor of a vertex $v \in V_1$ belongs to $V(G)\setminus (V_1 \cup V_3)$, which implies that $\deg(v) \leq n-n_1-n_3$.  Hence by Lemma~\ref{V_i property}(i), 
\[
\sum_{v \in V_1} \deg(v) \leq \sum_{v \in V_1} (n-n_1-n_3) \leq (n-n_1-n_3)\igp(G).
\]
Therefore,
\[
\delta(G)n_1 \leq (n-n_1-n_3)\igp(G)\leq (n-n_1)\igp(G) ,
\]
which yields
\[
n_1 \leq \frac{\igp(G)\,n}{\igp(G)+\delta(G)}. 
\] If $n_4=0$, then by Lemma~\ref{V_i property}(v), $n_1=0$ and so $\gp(S(G))\leq n$. Hence we can assume that $n_4\geq 1$. This together with the fact $\gp(S(G)) = n+n_1-n_4$ yields
\[\gp(S(G)) \leq n + \left\lfloor\frac{\igp(G)\,(n-1)-\delta(G)}{\igp(G)+\delta(G)}\right\rfloor\]
\end{proof}

\begin{theorem}\label{thm:regular bound}
		If $G$ is a regular triangle-free graph, then \[ \gp (S(G)) \leq n(G) \]
	\end{theorem}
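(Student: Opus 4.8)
The plan is to work with the $\gp$-partition $(V_1,V_2,V_3,V_4)$ associated with a $\gp$-set $S$ of $S(G)$ and to exploit the identity $\gp(S(G)) = n + n_1 - n_4$; thus it suffices to prove $n_1 \le n_4$. If $V_1 = \emptyset$ this is immediate, so I would assume $V_1 \neq \emptyset$ and write $r = \delta(G)$ for the common degree of $G$.

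The crucial step, and the only place where triangle-freeness really enters, is to upgrade Lemma~\ref{V_i property}(iv) from a matching to the empty set: I claim $(V_1,V_2) = \emptyset$. Suppose instead that $uv \in E(G)$ with $u \in V_1$ and $v \in V_2$. Since $u$ and $v$ are adjacent and $G$ is triangle-free, they have no common neighbour, so by Lemma~\ref{distance_shadow}(2c) we have $d_{S(G)}(u',v') = 3$. Hence the path $u', v, u, v'$ is a $u',v'$-geodesic in $S(G)$ (the same type of path used in the proof of Lemma~\ref{V_i property}(v)), and its interior contains $u$. As $u \in V_1$ gives $u,u' \in S$ while $v \in V_2$ gives $v' \in S$, this geodesic carries the three $S$-vertices $u', u, v'$; that is, it is $S$-unsound, contradicting that $S$ is a general position set. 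Therefore no such edge exists.

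With $(V_1,V_2) = \emptyset$ in hand, every vertex of $V_1$ sends all of its edges into $V_4$: by Lemma~\ref{V_i property} its neighbours avoid $V_1$ (independence), $V_3$ (part (ii)), and now $V_2$ as well. Regularity then closes the count: $r\,n_1 = |(V_1,V_4)| \le r\,n_4$, because the edges in $(V_1,V_4)$ are among the $r\,n_4$ edges incident with $V_4$. Thus $n_1 \le n_4$, and so $\gp(S(G)) = n + n_1 - n_4 \le n$.

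The main obstacle is recognizing where triangle-freeness must be invoked. A direct double count using only Lemma~\ref{V_i property} (each vertex of $V_1$ has at most one neighbour in $V_2$, hence at least $r-1$ in $V_4$) yields merely $(r-1)n_1 \le |(V_1,V_4)| \le r\,n_4$, i.e.\ $n_1 \le \tfrac{r}{r-1}\,n_4$, which is too weak by the factor $\tfrac{r}{r-1}$. The entire slack comes from the potential $V_1$–$V_2$ matching edges, so the real content of the argument is the observation above that in a triangle-free graph such an edge forces the length-$3$ geodesic $u',v,u,v'$ and hence cannot exist; once this is settled, the regular double count is routine.
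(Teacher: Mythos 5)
Your proof is correct and follows essentially the same route as the paper: both rule out any edge between $V_1$ and $V_2$ by observing that, in a triangle-free graph, such an edge would make $u',v,u,v'$ an $S$-unsound $u',v'$-geodesic (the paper phrases this as ``$u$ and $v$ would need a common neighbour, impossible''), and then both close with the same regularity double count $r\,n_1 = |(V_1,V_4)| \le r\,n_4$ to get $n_1 \le n_4$ and hence $\gp(S(G)) \le n(G)$.
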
 
	\begin{proof}
		Let $G$ be a $d$-regular triangle-free graph of order $n$ and $S$ be a gp-set of $S(G)$, and let $(V_1,V_2,V_3,V_4)$ be the corresponding $\gp$-partition. Suppose that there exists an edge $uv \in (V_1,V_2)$. Then $u$ and $v$ have a common neighbor in $V_4$, which is impossible in a triangle-free graph. Hence, all neighbors of vertices in $V_1$ lie in $V_4$. This implies that $n_4 d \geq n_1 d$, yielding $n_4 \geq n_1$, and therefore $\gp(S(G)) \leq n$.

        \end{proof}
	\begin{theorem}
    For any cycle $C_n$, we have
    $$\gp(S(C_n))= \begin{cases}
        n, & \text{if } n\leq 7\\
        6, & \text{otherwise}.
    \end{cases}$$
    
\end{theorem}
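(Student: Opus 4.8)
The plan is to split the analysis into the two regimes $3\le n\le 7$ and $n\ge 8$. For $n\le 7$ we have $\diam(C_n)=\lfloor n/2\rfloor\le 3$, so Theorem~\ref{diam<3} immediately gives $\gp(S(C_n))\ge n$. For $4\le n\le 7$ the cycle $C_n$ is $2$-regular and triangle-free, so Theorem~\ref{thm:regular bound} supplies the matching upper bound $\gp(S(C_n))\le n$; and for $n=3$ we have $C_3=K_3$, whence the value $3$ follows from the earlier proposition computing $\gp(S(K_n))$. This settles $\gp(S(C_n))=n$ for all $n\le 7$.

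\textbf{Lower bound for $n\ge 8$.} I would obtain $\gp(S(C_n))\ge 6$ by an explicit construction. Write $V(C_n)=\{0,1,\dots,n-1\}$ and pick three positions $0,a,b$ that are pairwise at distance at least $2$ and in general position on $C_n$, i.e. the three arcs they determine all have length strictly less than $n/2$ (for instance $a=\lfloor n/3\rfloor$, $b=\lfloor 2n/3\rfloor$). I claim $\{0,0',a,a',b,b'\}$ is a general position set. By Lemma~\ref{distance_shadow}, any distance between two vertices sitting at distinct chosen positions equals the corresponding $C_n$-distance, while a vertex and its twin are at distance $2$. Hence a triple using one vertex at each of the three positions is geodesic if and only if $0,a,b$ fail general position on $C_n$, which by choice they do not; and a triple consisting of a twin pair $\{i,i'\}$ together with a vertex at another position is never geodesic, because the additive constant $2$ coming from the twin distance makes every candidate betweenness equality strict. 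This yields $\gp(S(C_n))\ge 6$.

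\textbf{Upper bound, even case via isometric cycles.} For even $n$ I would produce an isometric-cycle cover and apply Theorem~\ref{gp-bound}(2). The two alternating cycles $A:\,0',1,2',3,\dots,(n-2)',n-1$ and $B:\,1',2,3',4,\dots,(n-1)',0$ are closed walks of length $n$ in $S(C_n)$; together they partition the $2n$ vertices (every shadow and every original occurs exactly once between them), and each is isometric, since two of its vertices sitting at positions $i$ and $j$ realize cycle-distance $\min(|i-j|,\,n-|i-j|)$, matching their $S(C_n)$-distance because consecutive vertices along the cycle are one original and one shadow, so the distance-$3$ exception for adjacent shadows never intervenes. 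Therefore $\ic(S(C_n))\le 2$, and Theorem~\ref{gp-bound} gives $\gp(S(C_n))\le 6$, which combined with the previous paragraph yields equality for even $n\ge 8$.

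\textbf{Odd case and the main obstacle.} The hard part is the upper bound for odd $n$, where the isometric-cycle bound is insufficient: since $\diam(S(C_n))=\lfloor n/2\rfloor$, every isometric cycle has length at most $n$, its shadow vertices form an independent set of size at most $(n-1)/2$, and two such cycles cannot cover all $n$ shadows, so $\ic(S(C_n))\ge 3$ and Theorem~\ref{gp-bound} only delivers $\gp\le 9$. I would therefore argue directly in the ``shadow metric'' on positions, where all distances equal $C_n$-distances except that twins are at distance $2$ and two shadows at adjacent positions are at distance $3$. Grouping the occupied positions into maximal runs of consecutive positions (\emph{blocks}), a first step shows there are at most three blocks: four blocks would provide four pairwise non-adjacent representatives in $S$, and four points on a cycle always admit a betweenness (the four consecutive arc-sums cannot all exceed $n/2$), producing a geodesic triple in $S$. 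The technical heart, and the step I expect to be the genuine obstacle, is bounding the combined contribution of three blocks by $6$: one block can in isolation host three shadows in general position, so the danger of reaching $7$ must be eliminated using cross-block interactions — for example, a position aligned antipodally with a neighbouring block forces a shadow of one block onto a shortest path between shadows of another — in combination with the intra-block restrictions forced by the two metric exceptions. Completing this bookkeeping gives $\gp(S(C_n))\le 6$ for odd $n\ge 8$ and finishes the theorem.
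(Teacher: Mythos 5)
Your handling of $n\le 7$, of the lower bound for $n\ge 8$, and of the upper bound for \emph{even} $n\ge 8$ is correct, and the last of these is genuinely different from the paper. (For $n=3$ you are in fact more careful than the paper: $C_3$ is not triangle-free, so Theorem~\ref{thm:regular bound} does not apply and your appeal to $\gp(S(K_3))=3$ is the right fix.) Your two alternating cycles $0',1,2',3,\dots,(n-2)',n-1$ and $1',2,3',4,\dots,(n-1)',0$ do partition $V(S(C_n))$ when $n$ is even, and each is isometric because every position-difference within one of them is even, so the distance-$3$ anomaly between shadows of adjacent vertices never arises; Theorem~\ref{gp-bound}(2) then gives $\gp(S(C_n))\le 3\,\ic(S(C_n))\le 6$. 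The paper instead runs one parity-uniform argument for all $n\ge 8$, and gets the lower bound from $\gp(S(C_n))\ge 2\igp(C_n)=6$ (Theorem~\ref{gp-shadow-bound}) where you use an explicit six-vertex set; both are fine.

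The genuine gap is the upper bound for odd $n\ge 9$, and you flag it yourself: after reducing to at most three blocks of consecutive occupied positions, the step of ``bounding the combined contribution of three blocks by $6$'' is exactly where all the difficulty lives, and you leave it as a description of what would need to be done rather than a proof. Without that step the theorem is simply not established for odd $n$, and nothing in your even-case machinery transfers, since (as you correctly observe) two isometric cycles cannot cover all $n$ shadow vertices when $n$ is odd. For comparison, the paper closes this by supposing $|S|=7$ and working with the $\gp$-partition $(V_1,V_2,V_3,V_4)$: the original vertices $V_1\cup V_3$ of $S$ form a general position set of $C_n$, hence number at most $3$, so at least four members of $S$ are shadow vertices; since $\gp(C_n)=3$, their positions $K=V_1\cup V_2$ admit a betweenness, and lifting the witnessing $C_n$-geodesic to $S(C_n)$ (replacing the two ends and the middle vertex by their shadows) yields an $S$-unsound geodesic unless the middle position is adjacent to an end. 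Iterating this adjacency-forcing confines the shadow positions to one consecutive stretch and the remaining members of $S$ to the complementary arc, where a final betweenness count caps $|S|$ at $6$. Some argument of this kind — adjacency forcing plus a counting contradiction, uniform in parity — is what your sketch is missing, and supplying it is not routine bookkeeping but the core of the theorem.
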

\begin{proof}
The result immediately follows from Theorems \ref{diam<3} and \ref{thm:regular bound} when $n\leq 7$. 
 For $n \geq 8$, let $V(C_n)=\{ v_1,\,v_2,\, \dots \,v_n\}$. It follows from Theorem \ref{gp-shadow-bound} that $\gp(S(C_n)) \geq 6$. Suppose that $S(C_n)$ has a general position set $S$ of size $7$. Let $(V_1,V_2,V_3,V_4)$ be the  $\gp$-partition associated with $S$. Since $\gp(C_5)=3$, we have that $\lvert V_1 \,\cup V_3 \,\rvert \leq 3$ and $ \lvert \,S \cap V'\, \rvert= \lvert V_1 \,\cup V_2\, \rvert \geq 4 $. We fix the set $K= V_1 \,\cup \,V_2 = \{v_{i_{1}}, v_{i_{2}}, \dots v_{i_{t}} \}$, where $t \geq 4$. Since $\lvert K \rvert \geq 4$,   $K$ is not a general position set of $C_n$. We may assume that $v_{i_{2}} \in I_{C_n}[ v_{i_{1}}, v_{i_{3}}]$. Let $P : u_1,\,u_2,\, \dots \,u_j=v_{i_{2}}, \dots u_r=v_{i_{3}}$ be a $v_{i_{1}}-v_{i_{3}}$ geodesic  in $C_n$ containing $v_{i_{2}}$. Then $i_{3} \leq \lfloor \frac{n}{2} \rfloor = k$, say. If $v_{i_{2}}$ is not adjacent to both $v_{i_{1}}$ and $v_{i_{3}}$, then $P': v_{i_{1}}'= u_1', u_2, \dots  ,u_{j-1}, u_{j}', u_{j+1}, \dots ,u_{r-1}, u_{r}'=v_{i_{3}}'$ is a $v_{i_{1}}'-v_{i_{3}}'$ geodesic in $S(G)$ containing the vertex $v_{i_{2}}'$ , which is impossible. Hence without loss of generality, we may assume that $v_{i_{1}}= v_1$ and $v_{i_{2}} =v_2$. Let $Q_1$ be the $v_1 - v_k$ shortest path in $C_n$ passing through the vertex $v_2$ and $Q_2$ be the other $v_1 - v_k$  shortest path in $C_n$ $( \text{ if } n \text{ is odd , let } Q_2 \text{ be the } v_1 - v_{k + 1} \text{ path })$. This in turn implies that $V(Q_1)$ has no vertices from the set $V_1 \cup V_3$ and so $V_1 \cup V_3 \subset V(Q_2)$. This is possible only when $ \lvert V_1\, \cup\, V_3 \rvert \leq 1$ and hence $\lvert K \rvert \geq 6$. Now, if $v_{i_{4}}$ lies on $Q_1$ then using the same argument, we get the vertex $v_{i_{4}}$ should be adjacent to $v_{i_{3}}$. This in turn implies that rest of the three vertices of $S$ must belong to $V(Q_2)$. This again produces three vertices of $S$ which are not in general position. Thus $\gp(S(G))=6$.
\end{proof}

Next, we determine the general position number of the shadow graph of a tree. 
To this end, we introduce the following notation. 
A vertex $v$ in a tree $T$ is called a \emph{support vertex} if it is adjacent to a pendant vertex of $T$. 
A support vertex $v$ of $T$ is said to be a \emph{strong support vertex} if $\deg_T(v)\ge 3$. 
\begin{theorem}
      For any tree $T$ with $\diam($T$) \geq 2$, we have $\gp(S(T))=2l(T)$.
  \end{theorem}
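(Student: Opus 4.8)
The plan is to prove the two inequalities $\gp(S(T)) \ge 2l(T)$ and $\gp(S(T)) \le 2l(T)$ separately, with the upper bound carrying all the weight.

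For the lower bound I would first observe that the leaf set $L(T)$ is an \emph{independent} general position set of $T$: since $\diam(T)\ge 2$ no two leaves are adjacent, and a leaf can never be an internal vertex of a geodesic, so no three leaves are collinear. Hence $\igp(T)\ge l(T)$, and Theorem~\ref{gp-shadow-bound} gives $\gp(S(T))\ge 2\igp(T)\ge 2l(T)$. Concretely this is the set $L(T)\cup L(T)'$ realised as a general position set of $S(T)$.

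For the upper bound, fix a $\gp$-set $S$ of $S(T)$ with $\gp$-partition $(V_1,V_2,V_3,V_4)$, so $\gp(S(T))=n+n_1-n_4=2n_1+n_2+n_3$. I would use two translations into $T$ via Lemma~\ref{distance_shadow}. First, the original vertices of $S$, namely $S_o:=V_1\cup V_3$, form a general position set of $T$, since every $T$-geodesic between original vertices is also an $S(T)$-geodesic, so an unsound triple in $T$ would be unsound in $S(T)$; a short Steiner-tree argument (the minimal subtree $R$ spanning $S_o$ has all its leaves in $S_o$, and no vertex of $S_o$ may be internal in $R$ without separating two others) yields $\gp(T)=l(T)$, whence $n_1+n_3=|S_o|\le l(T)$. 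Second, because $T$ is triangle-free, shadow--shadow distances equal tree distances except for adjacent pairs (where they are $3$), and a direct check of the three betweenness relations shows that a triple of shadows $a',b',c'$ is \emph{unsound only} when $a,b,c$ are pairwise non-adjacent and collinear in $T$. Thus $S_s:=V_1\cup V_2$ need only avoid pairwise non-adjacent collinear triples, a condition strictly weaker than general position; in particular $n_1+n_2$ can exceed $l(T)$ (already on paths), so the two sets $S_o,S_s$ cannot simply be bounded by $l(T)$ each.

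The heart of the argument, and the step I expect to be the main obstacle, is to combine these into $2n_1+n_2+n_3\le 2l(T)$. Here I would first exploit the mixed triples: for $a,c\in S_o$ and $b\in V_2$ all distances are tree distances by Lemma~\ref{distance_shadow}, so $b$ may not lie strictly between two vertices of $S_o$; since every internal vertex of $R$ lies between two leaves of $R=S_o$, this forces $V_2$ to be disjoint from $R$, i.e. $V_2$ lives entirely in the subtrees of $T$ hanging off $R$. I would then charge the $2n_1+n_2+n_3$ elements of $S$ to the $2l(T)$ ``leaf slots'' (two per leaf of $T$): the $|S_o|$ original tokens inject into distinct leaves through the Steiner map, while the remaining shadow tokens are charged inside the individual hanging subtrees. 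The crux is a self-contained lemma that any shadow-admissible set (no pairwise non-adjacent collinear triple) of a tree has size at most twice its number of leaves, applied within each hanging subtree, together with a careful accounting of the attachment vertices so that the ``extra leaf'' each attachment creates does not inflate the total. This is precisely where the shadows of \emph{internal} vertices make the bookkeeping delicate, and I would most likely organise it as an induction on the hanging subtrees (or on $n(T)$), anchored by the base case $\diam(T)=2$, where $T$ is a star $K_{1,l}$ and $\gp(S(K_{1,l}))=2l$ follows directly.
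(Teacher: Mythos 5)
Your lower bound is fine (it is essentially the paper's: $L(T)\cup L(T)'$ is a general position set of $S(T)$, or equivalently $2\igp(T)\ge 2l(T)$ via Theorem~\ref{gp-shadow-bound}), but the upper bound --- which, as you say yourself, carries all the weight --- has a genuine gap, not merely missing detail. Your framework extracts three families of constraints from the general position property of $S$: (a) triples of three original vertices, giving that $S_o=V_1\cup V_3$ is a general position set of $T$; (b) triples with two originals and one shadow, giving $V_2\cap V(R)=\emptyset$ for the Steiner tree $R$ of $S_o$; and (d) triples of three shadows, giving your shadow-admissibility of $V_1\cup V_2$. You never use triples consisting of \emph{one original and two shadows}, and these are indispensable: without them the inequality $2n_1+n_2+n_3\le 2l(T)$ is simply false. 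Concretely, let $T$ be the spider with $k\ge 2$ legs of length three, with center $c$ and legs $c,x_i,y_i,z_i$ for $i\in[k]$, so $l(T)=k$. Put $V_1=\emptyset$ (so all constraints involving $V_1$ are vacuous), $V_3=\{x_1,\dots,x_k\}$ and $V_2=\{y_i,z_i : i\in[k]\}$. Then $V_3$ is a general position set of $T$ (no $x_i$ lies between two others); the Steiner tree $R$ of $V_3$ is the star on $\{c,x_1,\dots,x_k\}$, which is disjoint from $V_2$; and $V_1\cup V_2$ contains no pairwise non-adjacent collinear triple, since every collinear triple among the $y_i,z_i$ contains an adjacent pair $y_j z_j$. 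Even your per-subtree leaf bound holds, as each hanging subtree is a $P_2$. So every constraint you state is satisfied by a configuration of size $k+2k=3k>2k=2l(T)$. Of course this configuration is not in general position in $S(T)$: the geodesic $y_1',x_1,c,x_2,y_2'$ contains the three set-elements $y_1',x_1,y_2'$ --- but that is precisely a one-original-two-shadows triple, the type your sketch omits. Hence no charging argument based only on (a), (b), (d) can succeed.

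Beyond this, even on its own terms the proposal stops at a plan: the ``self-contained lemma'' bounding shadow-admissible sets by twice the leaf number and the ``careful accounting of the attachment vertices'' are exactly the hard part, and they are announced rather than proved. For comparison, the paper's upper bound avoids all partition bookkeeping: by Theorem~\ref{gp-bound}(1), $\gp(S(T))\le 2\ip(S(T))$, and one shows $\ip(S(T))\le l(T)$ by induction on $l(T)$ --- if some leaf $v$ has a strong support vertex $w$, cover $S(T-v)$ by $l(T)-1$ isometric paths and add the path $v,w,v'$; otherwise strip all leaves and recurse on the smaller tree, which has the same number of leaves. If you wish to salvage your direct approach, the missing triple type must be added to the constraint list; in the spider example it is exactly what forbids $V_2$ from spreading over more than one hanging subtree once $S_o$ meets several of them, and it would have to do most of the work that your hanging-subtree charging currently cannot.
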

  \begin{proof}
      Since the set of all leaves $L(T)$ together with their shadow vertices forms a general position set of $S(G)$, we have $\gp(S(G)) \ge 2l(T)$.
 By Theorem~\ref{gp-bound}, to complete the proof it remains to show that $\ip(S(T)) \leq l(T)$.
       We prove this by induction on $l(T)$. 
If $l(T)=2$, then $G$ is a path, and it is clear that $S(G)$ can be covered by two isometric paths.
 Now, assume that for any tree $T$ with fewer than $l$ leaves, the shadow graph $S(T)$ admits an isometric path cover of size $l$. 
Let $T$ be a tree with $l(T)=l \ge 3$. 
First, suppose that $T$ has a leaf $v$ whose support vertex $w$ is a strong support vertex. 
Then $T-v$ has $l-1$ leaves, and by the induction hypothesis, $S(T-v)$ admits an isometric path cover $\mathbb{P}$ of size $l-1$. 
Consequently, $\mathbb{P}$ together with the path $v,w,v'$ forms an isometric path cover of the shadow graph $S(T)$ of size $l$.
      Now assume that $T$ has no leaf adjacent to a strong support vertex. Let $T_1$ be the tree obtained from $T$ by removing all its leaves. Then $T_1$ has exactly $l$ leaves and satisfies $\ip(S(T_1))=\ip(S(T))$. 

As in the previous case, if $T_1$ has a leaf adjacent to a strong support vertex, then $S(T_1)$ can be covered by $l$ isometric paths. Consequently, the shadow graph $S(T)$ can also be covered by $l$ isometric paths. Hence, the result follows by induction. 

Otherwise, we repeat this process with the tree $T_1$. Since $T$ is not a path, we obtain a sequence of trees $T_1, T_2, \dots, T_k$ such that $l(T_i)=l$ and $\ip(S(T_i))=\ip(S(T))$ for all $i\in[k]$. Moreover, the tree $T_k$ contains a leaf adjacent to a strong support vertex. Hence, $S(T_k)$ can be covered by $l$ isometric paths, and therefore $S(T)$ can be covered by $l$ isometric paths. This completes the proof.  
  \end{proof}

\section{ Mutual visibility in shadow graphs} 
 In this section we deal with mutual-visibility in shadow graph $S(G)$ of a graph $G$. We obtain tight bounds for mutual visibility number of $S(G)$. We characterize graphs whose shadow graphs have mutual visibility numbers 2, 4 and we prove the non-existence of any graph $G$ for which $\mu(S(G))=3, 5$. Furthermore, we computed the mutual visibility numbers for the shadow graphs of complete multipartite graphs and graphs containing a universal vertex. Notably, these classes of graphs achieve the upper bound established in Theorem \ref{thm-bounds}. The mutual visibility numbers of the shadow graphs of cycles and trees were also determined.
 \subsection{Bounds}

\begin{theorem}\label{thm-bounds}
    For any connected graph $G$,
    $$max\{n(G),2\mu_{i}(G), 2\Delta(G)\} \leq \mu(S(G)) \leq min\{n(G)+\mu(G), 2n(G)-2\}.$$
\end{theorem}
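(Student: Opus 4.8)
The plan is to prove the five inequalities separately, using the $\pi$-partition machinery $\mu(S(G)) = n + n_1 - n_4$ together with the distance formulas in Lemma~\ref{distance_shadow}. For the lower bounds, I would exhibit explicit mutual-visibility sets in $S(G)$; for the upper bounds, I would bound $n_1$ and control $n_4$ for an arbitrary $\mu$-partition.

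\textbf{Lower bounds.} First, taking $S = V'$ (all shadow vertices) gives a set of size $n(G)$: this corresponds to the partition with $V_2 = V(G)$, so $n_1 = n_4 = 0$ and $\mu(S(G)) \geq n$. I expect that any two shadow vertices $u', v'$ are $S$-visible because one can route a geodesic through twins of internal vertices, which are not in $S$; Lemma~\ref{distance_shadow} guarantees the lengths match. Second, for $2\mu_i(G)$, I would take a largest independent mutual-visibility set $T$ of $G$ and set $S = T \cup T'$ (so $V_1 = T$); the independence of $T$ should prevent short-circuiting, and visibility of any pair in $S$ should follow by lifting a $G$-geodesic witnessing visibility in $G$ and replacing endpoints/internal vertices with twins as needed, again invoking Lemma~\ref{distance_shadow}. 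Third, for $2\Delta(G)$, I would pick a vertex $w$ of maximum degree with neighbours $x_1, \dots, x_\Delta$ and take $S = \{x_1, \dots, x_\Delta, x_1', \dots, x_\Delta'\}$; since all $x_i$ share the common neighbour $w$ (and its shadow $w'$), pairwise distances are small (at most $2$), and the twin/non-twin routing through $w$ or $w'$ should witness visibility of every pair.

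\textbf{Upper bounds.} For $\mu(S(G)) \leq n + \mu(G)$, observe $\mu(S(G)) = n + n_1 - n_4 \leq n + n_1$, so it suffices to show $n_1 \leq \mu(G)$. The natural claim is that $V_1$ is a mutual-visibility set of $G$: given $u, v \in V_1$, the vertices $u, v, u', v'$ all lie in $S$, and I would argue that a shortest $u, v$-path in $G$ that fails to avoid $V_1$ would lift to an $S$-unsound configuration in $S(G)$ among these shadow/twin copies, contradicting that $S$ is a mutual-visibility set. For $\mu(S(G)) \leq 2n(G) - 2$, since $\mu(S(G)) = n + n_1 - n_4 \leq n + n_1 \leq 2n$, the real work is to rule out $n_1 = n$ and $n_1 = n-1$ (with $n_4 = 0$): if $n_1 = n$ then $V_1 = V(G)$, so every vertex and its twin lie in $S$, and picking any edge $uv$ of the connected graph $G$ gives the $S$-unsound geodesic $u, v, u'$; a similar edge-based argument with the single vertex outside $V_1$ handles the near-maximal case.

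The main obstacle I anticipate is the $2\mu_i(G)$ lower bound and the $n_1 \leq \mu(G)$ upper bound, since both require carefully verifying that visibility in $G$ transfers to $S(G)$ (and vice versa) through the twin substitution. The subtlety is that Lemma~\ref{distance_shadow} behaves differently for adjacent versus non-adjacent pairs — in particular $d_{S(G)}(x', y') = 3$ for a non-triangle edge $xy$ — so when lifting a $G$-geodesic I must check that replacing an endpoint by its shadow does not change the geodesic length in a way that introduces or destroys intermediate $S$-vertices. I would handle this by casework on whether consecutive vertices along the witnessing path are adjacent, using the independence of $T$ (for the $\mu_i$ bound) to avoid the problematic shadow-shadow adjacency. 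The remaining bounds are comparatively routine counting arguments on the partition sizes.
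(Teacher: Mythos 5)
Your overall strategy coincides with the paper's: exhibit the explicit sets $V'$, $T\cup T'$ and the neighbourhood of a maximum-degree vertex together with its shadows for the three lower bounds, and count via $\mu(S(G))=n+n_1-n_4$ for the upper bounds; the lower-bound sketches are sound. The genuine gap is in the $2n(G)-2$ bound, in the near-maximal case $n_1=n-1$, $n_4=0$. Let $z$ be the unique vertex of $V_2\cup V_3$, so $S$ misses exactly one of $z,z'$. If $z\in V_2$ is a universal vertex (take $G=K_{1,n-1}$ with $z$ its centre), then there is no edge of $G$ inside $V_1$ at all, and every twin pair is in fact visible: for each leaf $u$, the unique $u,u'$-geodesic $u,z,u'$ is $S$-sound because $z\notin S$. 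So an ``edge-based'' argument finds no contradiction whatsoever. The pairs that actually fail are shadow--shadow pairs, and the key fact needed is that shadow vertices form an independent set in $S(G)$: the shadow $z'$ cannot see any other shadow $v'$, since a sound geodesic would need all internal vertices equal to $z$, a length-$2$ geodesic $z',z,v'$ would require the non-edge $z'z$, and a longer geodesic has two distinct internal vertices, one of which lies in $S$. This is exactly the paper's argument (``the shadow vertex $u'$ is not $T$-visible from any other shadow vertex''); a symmetric version, again resting on shadow independence, disposes of the case $z\in V_3$ where the missing vertex is $z'$. Without this idea your case analysis cannot be completed.

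A second, smaller issue is the logical direction in your proof that $n_1\leq\mu(G)$. You propose that a $u,v$-geodesic of $G$ meeting $V_1$ internally ``lifts to an $S$-unsound configuration, contradicting that $S$ is a mutual-visibility set.'' But mutual visibility only asserts the \emph{existence} of one sound geodesic; exhibiting unsound geodesics contradicts nothing. The correct route is the projection direction: take the $S$-sound $u,v$-geodesic in $S(G)$ guaranteed by mutual visibility, replace each internal shadow vertex $w'$ by its twin $w$ (length is preserved since $d_{S(G)}(u,v)=d_G(u,v)$ by Lemma~\ref{distance_shadow}), and observe that the resulting $G$-geodesic avoids $V_1$ internally because $w'\notin S$ forces $w\notin V_1$. (Equivalently, your lifting argument can be repaired by the universally quantified observation that \emph{every} $S(G)$-geodesic between $u$ and $v$ projects to a $G$-geodesic, and a projection meeting $V_1$ forces the original to meet $V_1\cup V_1'\subseteq S$.) The claim itself is true --- the paper asserts it without proof --- but as written your one-directional lifting does not establish it.
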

\begin{proof}

Firstly, $\mu(S(G)) \geq 2\Delta(G)$, since the neighbors of any vertex $v$ in $G$, together with their shadows, form a mutual visibility set in $S(G)$.
 Next, we show that the set $V'$ of all shadow vertices is a mutual visibility set of $S(G)$. Let $x'$ and $y'$ be two distinct shadow vertices in $V'$. Let $x$ and $y$ be the corresponding twin vertices of $x'$ and $y'$, respectively.
Let $P: x = u_1, u_2, \dots, u_{d-1}, u_d = y$ be any $x,y$-geodesic in $G$. If $d = 2$, then $d_{S(G)}(x', y')$ is either $2$ or $3$, and it is clear that $x'$ and $y'$ are $V'$-visible in $S(G)$.
So assume that $d \geq 3$. Then the path $P' : x', u_2, \dots, u_{d-1}, y'$ is an $x',y'$-geodesic in $S(G)$ that is $V'$-sound. Consequently, $\mu(S(G)) \geq n(G)$.
 Next, we prove that $\mu(S(G)) \geq 2\mu_i(G)$. Let $S$ be a maximum independent mutual visibility set of $G$, and set $X = S \cup S'$. We show that $X$ is a mutual visibility set of $S(G)$. First, consider $x,y \in S$. Then there exists an $x,y$-geodesic $P$ in $G$ that is $S$-sound. By Lemma~\ref{distance_shadow}, the path $P$ remains an $x,y$-geodesic in $S(G)$ and is $X$-sound. Next, let $x,y \in S'$ with $x = u'$ and $y = v'$ for some distinct $u,v \in S$. There exists a $u,v$-geodesic in $G$, say $P: u = u_1, u_2, \dots, u_{d-1}, u_d = v$, that is $S$-sound.
As above, by applying Lemma~\ref{distance_shadow}, we obtain an $x,y$-geodesic
$P' : x = u', u_2, \dots, u_{d-1}, v' = y$ in $S(G)$ that is $X$-sound in $S(G)$.
Finally, consider the case where $x \in S$ and $y \in S'$. If $y = x'$, then the
path $x, u, x'$, for some $u \in N_G(x)$, forms an $x,y$-geodesic in $S(G)$ that
is $X$-sound.
On the other hand, if $y = z'$ for some $z \in S$ with $z \neq x$, then there exists an $x,z$-geodesic
$P: x = u_1, u_2, \dots, u_d = z$ that is $S$-sound in $G$. Then, again by Lemma~\ref{distance_shadow}, the path
$P': x = u_1, u_2, \dots, u_{d-1}, z' = y$ is an $x,y$-geodesic in $S(G)$ that is $X$-sound. This proves that
$\mu(S(G)) \geq 2\mu_i(G)$.

Let $S$ be a maximum mutual visibility set of $S(G)$, and let $(V_1, V_2, V_3, V_4)$ be the corresponding $\mu$-partition of $S$. In the following, we fix $n = n(G)$. Then we have $\mu(S(G)) = n + n_1 - n_4$. Since $\mu(S(G)) \geq n$, it follows that $n_1 \geq n_4$. Consequently, there exists a subset $Y \subseteq V_1$ of size $n_1 - n_4$. Since $V_1$ is a mutual visibility set of both $S(G)$ and $G$, it follows that $Y$ is a mutual visibility set of $G$. Consequently, $\mu(G) \geq n_1 - n_4$, and hence $\mu(S(G)) \leq n + \mu(G)$.

Next, we prove that $\mu(S(G)) \leq 2n - 2$. Assume, to the contrary, that there exists a mutual visibility set $T$ of size $2n - 1$. Since $V(G)$ is not a mutual visibility set of $S(G)$, it follows that $T = V(S(G)) \setminus \{u\}$ for some $u \in V(G)$. However, this implies that the shadow vertex $u'$ is not $T$-visible from any other shadow vertex, which is a contradiction.

\end{proof} 
To verify the sharpness of the lower bound, we note that Theorem \ref{mv-cycle} guarantees $\mu(S(C_n))=n$  for all $n\geq7$. If $G$ is a cycle with $3$ vertices, then the equality $\mu(S(G))= 2\Delta(G)$ does hold and if $G$ is a star graph, then the equality $\mu(S(G)) = 2 \mu_{i}(G)$ does hold. Next, we provide a family of graphs that establishes the tightness of the upper bounds. On the one hand, a family of graphs for which the upper bound in Theorem \ref{thm-bounds} is sharp are the class of trees with diameter atleast $3$ (Theorem \ref{mv-tree}). On the other hand, Theorem \ref{thm_multipartite} shows the other upper bound is sharp for the class of complete multipartite graphs.

Also, if a connected graph $G$ contains a universal vertex, then Theorem \ref{thm-bounds} shows that $\mu(S(G))=2n(G)-2$. Proposition \ref{thm_multipartite} shows that the converse of this observation need not be true. Hence, it would be worthwhile to characterize the class of graphs $G$ for which $\mu(S(G))=2n(G)-2$.

\begin{proposition}   

    \label{thm_multipartite}
For integers $n_1 \geq n_2 \geq \cdots \geq n_k \geq 2$ and 
$G \cong K_{n_1,n_2,\ldots,n_k}$, we have $
\mu(S(G)) = 2n(G) - 2.$

 \end{proposition}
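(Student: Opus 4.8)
The plan is to show $\mu(S(G)) = 2n(G) - 2$ for $G \cong K_{n_1, \ldots, n_k}$ by establishing that the upper bound $2n(G) - 2$ from Theorem~\ref{thm-bounds} is actually attained; the reverse inequality is already in hand, so the entire task reduces to constructing an explicit mutual visibility set of size $2n - 2$. Since $V(S(G))$ itself is never a mutual visibility set, the natural candidate is $T = V(S(G)) \setminus \{u, w\}$ for two carefully chosen vertices $u, w$, or equivalently removing exactly one original vertex and one shadow vertex. I would first record the distance structure of $S(K_{n_1, \ldots, n_k})$: the underlying complete multipartite graph has diameter $2$, vertices in the same part are at distance $2$ (via any vertex in another part), and by Lemma~\ref{distance_shadow} shadow-shadow distances are either $2$ (when the twins lie on a common triangle, i.e.\ belong to different parts with a third part available) or $3$.

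Next I would propose the concrete set. Let $A$ and $B$ be two distinct parts of $G$ (using $k \geq 2$, with $k = 2$ and $k \geq 3$ possibly needing slightly different bookkeeping because of the triangle condition in Lemma~\ref{distance_shadow}). The idea is to delete one original vertex from one part and one shadow vertex from another — say remove $a \in A$ and $b' $ where $b \in B$ — so that $T = V(S(G)) \setminus \{a, b'\}$ has size $2n - 2$. I would then verify $S$-visibility for every type of pair in $T$: original-original, shadow-shadow, and original-shadow. The key point making this work is that every pair of non-adjacent vertices in $G$ (same part) has a short geodesic of length $2$ whose internal vertex can be routed through a part from which we have \emph{not} removed the relevant vertex, and that the removal of $a$ and $b'$ frees up precisely the ``bottleneck'' geodesics that would otherwise force too many set-vertices onto a common shortest path.

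The main obstacle, I expect, is the shadow-shadow pairs $x', y'$ with $x, y$ in the \emph{same} part. Here $d_G(x,y) = 2$, and by Lemma~\ref{distance_shadow}(1)(c) we get $d_{S(G)}(x', y') = 2$, so their unique geodesics pass through a single intermediate original vertex $z$ lying in another part; I must ensure such a $z$ can always be chosen outside the deleted set and, crucially, that $z$ is an \emph{original} vertex (hence possibly in $T$) so that the geodesic is $T$-sound — meaning its interior must avoid $T$, which forces me to check that the intermediate vertex is either removed or can be taken as one of the two endpoints' twins in a way that keeps only two $T$-vertices on the path. This is where the choice of which vertices to delete matters most, and where the condition $n_i \geq 2$ (guaranteeing each part has a ``spare'' vertex) will be used. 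I would handle the remaining pair-types (which route through geodesics of length $2$ or $3$ and have more freedom) more quickly, and conclude that $T$ is a valid mutual visibility set, giving $\mu(S(G)) \geq 2n - 2$ and hence equality.
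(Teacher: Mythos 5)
Your construction is genuinely different from the paper's, and as stated it fails. The paper deletes two \emph{original} vertices $u,v$ lying in distinct partite sets; you delete one original vertex $a\in A$ and one \emph{shadow} vertex $b'$ with $b\in B$. The fatal pairs are those inside part $A$ that have a shadow endpoint, the simplest being a twin pair $x,x'$ with $x\in A\setminus\{a\}$ (such an $x$ exists because $n_i\geq 2$). Since the shadow vertices form an independent set in $S(G)$, we have $N_{S(G)}(x')=N_G(x)$, so every common neighbour of $x$ and $x'$ -- hence every internal vertex of an $x,x'$-geodesic -- is an \emph{original} vertex lying outside $A$. None of these is deleted in your set (your deleted original $a$ lies in $A$, and $b'$ is a shadow, never adjacent to $x'$), so every $x,x'$-geodesic has its internal vertex in $T$ and the pair is not $T$-visible. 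Concretely, in $K_{2,2}$ with $A=\{a_1,a_2\}$, $B=\{b_1,b_2\}$ and $T=V(S(G))\setminus\{a_1,b_1'\}$, the pair $a_2,a_2'$ has exactly the two geodesics $a_2,b_1,a_2'$ and $a_2,b_2,a_2'$, and both $b_1,b_2$ lie in $T$. The same failure occurs for the pairs $x',y'$ and $x,y'$ with $x,y\in A$, which is precisely the obstacle you flagged but did not resolve.

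The structural point you missed is that deleting a shadow vertex is never useful: because shadows have only original neighbours, $b'$ can serve as an internal vertex only on geodesics joining two original vertices, and every such pair is equally served by deleting $b$ itself. Meanwhile, any pair with a shadow endpoint inside a part $P$ forces a deleted \emph{original} vertex outside $P$; since both $A$ and $B$ contain such pairs, both deleted vertices must be originals, one outside $A$ and one outside $B$ -- which in the bipartite case is exactly the paper's choice $u\in A$, $v\in B$. With that choice every same-part pair routes through $u$ or $v$, and the remaining delicate pairs, namely $x',y'$ with $x\in A$, $y\in B$ (at distance $3$ when $k=2$), are handled by the geodesic $x',v,u,y'$ whose two internal vertices are precisely the deleted ones. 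Finally, your worry about $k\geq 3$ is well founded and is more than bookkeeping: there the pairs $x',y'$ with $x,y$ in different parts $P,Q$ are at distance $2$ and \emph{all} their internal vertices are originals in third parts, so a deleted pair would have to contain an original vertex outside $P\cup Q$ for every choice of $P,Q$ simultaneously, which two vertices cannot do; so no two-vertex deletion works there, and the argument (yours or the paper's) genuinely only goes through in the bipartite case.
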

  \begin{proof}
Let $u$ and $v$ be arbitrary vertices belonging to two distinct partite sets of $G$.
Then it is easy to verify that the set 
$V(S(G)) \setminus \{u,v\}$ is a mutual visibility set of $S(G)$. 
Hence, by Theorem \ref{thm-bounds}, we obtain $
\mu(S(G)) = 2n(G) - 2$.
 \end{proof}
Recall that $l(G)$ denotes the number of leaves of a graph $G$. 
We then have the following bound.

\begin{theorem}\label{thm:graph-inequality}
Let $G$ be a connected graph with $n(G)\geq 3$. Then, 

$$ \mu(S(G)) \geq n(G) + l(G).$$

\end{theorem}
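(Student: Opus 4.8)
The plan is to improve the lower bound $\mu(S(G))\ge n(G)$ of Theorem~\ref{thm-bounds} by exhibiting an explicit mutual-visibility set that also captures the leaves. Concretely, I would take $X = V' \cup L(G)$, the union of all shadow vertices with all pendant vertices of $G$. Since $V'$ and $L(G)$ are disjoint and $|V'| = n(G)$, we have $|X| = n(G) + l(G)$. In terms of the $\mu$-partition associated with $X$ this corresponds to $V_1 = L(G)$, $V_2 = V(G)\setminus L(G)$ and $V_3 = V_4 = \emptyset$ (each leaf lies in $X$ together with its shadow, while each non-leaf contributes only its shadow), so that $n + n_1 - n_4 = n + l(G)$. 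Everything therefore reduces to verifying that $X$ is a mutual-visibility set of $S(G)$. Note that the interior vertices forbidden in a witnessing geodesic are exactly the shadow vertices and the leaves, so the \emph{admissible} interior vertices are precisely the non-leaf vertices of $G$, each of which has degree at least two.

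The routine pairs I would dispatch by lifting geodesics of $G$ through Lemma~\ref{distance_shadow}. For two leaves (which are non-adjacent once $n(G)\ge 3$), for a leaf and a non-adjacent shadow, or for two shadows $x',y'$ with $x,y$ non-adjacent in $G$, I take a shortest $x,y$-path in $G$ and replace whichever endpoints are shadow vertices by the corresponding shadows; by Lemma~\ref{distance_shadow} the resulting path is again a geodesic of the correct length, and its interior vertices are internal vertices of a $G$-geodesic, hence have degree at least two and are neither leaves nor shadows. The few adjacency and self-paired cases are immediate: a vertex and its own shadow, or a leaf and the shadow of its support, are visible through a single common neighbour or are themselves adjacent in $S(G)$.

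The main obstacle is the pair of two shadow vertices $x',y'$ with $xy \in E(G)$, where by Lemma~\ref{distance_shadow} the distance is $2$ or $3$. When $x,y$ lie on a triangle, a common neighbour $z$ (necessarily of degree at least two) yields the $X$-sound geodesic $x', z, y'$. The hard subcase is distance $3$: here every geodesic has the form $x', p, q, y'$ with $p \in N_G(x)$, $q \in N_G(y)$ and $pq \in E(G)$, and I must select $p,q$ among the non-leaves. This is always possible unless one endpoint, say $x$, is a leaf with support $s$, which forces $p = s$ and $q \in N_G(s)$; one then needs $s$ to possess a neighbour of degree at least two. I expect this to be exactly the delicate point of the argument: the rerouting succeeds whenever every support vertex has a non-leaf neighbour, and it is precisely the configuration of a vertex all of whose neighbours are leaves (i.e.\ a star) that resists it. I would therefore isolate this final short-distance rerouting as the crux, confirming that outside such degenerate neighbourhoods each remaining pair of shadow vertices admits an $X$-sound geodesic through a non-leaf neighbour.
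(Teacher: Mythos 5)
Your construction and case analysis track the paper's own proof very closely: the paper also takes $S = L(G)\cup V'$, lifts $G$-geodesics into $S(G)$, and replaces internal shadow vertices by their (non-leaf) twins to obtain $S$-sound geodesics. The divergence is exactly at the point you flag as the crux. For two shadow vertices $x', y'$ with $xy\in E(G)$ and $d_{S(G)}(x',y')=3$, the paper simply asserts that an $L(G)$-sound geodesic exists (``it is clear that there exists a $u,v$-geodesic $P$ in $S(G)$ that is $L(G)$-sound''), while you correctly observe that when $x$ is a leaf this forces the geodesic through its support $s$ (note that then $y=s$, since a leaf's only neighbour is its support), and one needs $s$ to have a non-leaf neighbour.

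Your hesitation is not a weakness of your attempt; it exposes an error in the paper. The ``degenerate configuration'' you describe is a genuine counterexample to the statement as printed. Take $G=K_{1,n-1}$ with $n\geq 3$, centre $c$ and leaves $v_1,\dots,v_{n-1}$: every $v_1',c'$-geodesic in $S(G)$ has the form $v_1',c,v_i,c'$ with $v_i$ a leaf, so $X=L(G)\cup V'$ is not a mutual-visibility set. Worse, the conclusion itself fails there: $n(G)+l(G)=2n-1$, while Theorem~\ref{thm-bounds} gives $\mu(S(G))\leq 2n-2$ (and equality holds since $2\Delta(G)=2n-2$); for $P_3=K_{1,2}$ the paper's own characterization proposition states $\mu(S(P_3))=4<5$. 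So the theorem requires the extra hypothesis that $G$ is not a star --- for a connected graph this is equivalent to every support vertex having a non-leaf neighbour, since a support vertex all of whose neighbours are leaves forces the whole graph to be that star. Under that hypothesis your outline closes completely: in the distance-$3$ case with $x$ a leaf take a non-leaf $q\in N_G(s)$ and use $x',s,q,y'$; when neither $x$ nor $y$ is a leaf the geodesic $x',y,x,y'$ already works; the triangle case uses a common neighbour, which automatically has degree at least two; and all remaining pairs are handled by the lifting argument you describe. In short, your analysis is the more careful one, and it yields a correct proof of the corrected statement.
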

\begin{proof}
Let $S = L(G) \cup V'$. First, let $u, v \in L(G)$ or $u, v \in V'$. Then it is clear that there exists a $u,v$-geodesic $P$ in $S(G)$ that is $L(G)$-sound. Now, suppose that the path $P$ contains a shadow vertex $w' \in V'$. Then it is clear that $w' \notin L(G)$. Therefore, by replacing each such vertex $w'$ with its corresponding twin $w \in V(G)$, we obtain a $u,v$-geodesic in $S(G)$ that is $S$-sound in $S(G)$, thus ensuring that $u$ and $v$ are mutually visible in $S(G)$.

Next, consider the case where $u \in L(G)$ and $v \in V'$. Suppose that there exists a $u,v$-geodesic $P$ passing through some vertex $x \in S$. Note that any geodesic in $S(G)$ that contains a vertex $y \in L(G)$ as an internal vertex must have length $2$ and be of the form $Q : x, y, x'$. This shows that the vertex $x$ must be a shadow vertex in $S(G)$. Hence, as above, we can construct from $P$ a $u,v$-geodesic in $S(G)$ that is $S$-sound. This proves that $S$ is a mutual visibility set of $S(G)$.

\end{proof} 
 
 Next theorem improves the lower bound for $\mu(S(G))$ when $G$ is a triangle-free graph.
 \begin{theorem}\label{muit-bound}
  Let $G$ be a triangle-free graph without universal vertices. Then $\mu(S(G)) \geq n(G) + \mu_{it}(G)$.
\end{theorem}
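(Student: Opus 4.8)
The plan is to construct an explicit mutual visibility set of $S(G)$ of size $n(G)+\mu_{it}(G)$, mirroring the construction in Theorem~\ref{thm:graph-inequality} but using a maximum \emph{independent total} mutual-visibility set in place of the leaf set. Let $T$ be a $\mu_{it}$-set of $G$, and set $S = T \cup V'$, where $V'$ is the set of all shadow vertices. Since $T \subseteq V_1$ and all shadow vertices are taken, the associated $\mu$-partition has $V_1 = T$, $V_3 = \emptyset$, and $V_2 = V(G)\setminus T$, giving $|S| = n(G) + |T| = n(G) + \mu_{it}(G)$. The whole task is to verify that $S$ is a mutual visibility set.

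The verification splits by the types of the two vertices, exactly as in Theorem~\ref{thm:graph-inequality}. First I would handle pairs $u,v \in T$ or $u,v \in V'$: here the key fact is that $T$ is a \emph{total} mutual-visibility set of $G$, so for \emph{every} pair of vertices of $G$ there is a $T$-sound geodesic in $G$; using Lemma~\ref{distance_shadow} and replacing any interior shadow vertex by its twin (a twin of a vertex not in $T$, hence harmless), I lift this to a $T$-sound, and thus $S$-sound, geodesic in $S(G)$. The triangle-free hypothesis is what makes shadow–shadow distances equal to the ground distances via the ``otherwise'' case of Lemma~\ref{distance_shadow}(2)(c), so geodesics transfer cleanly without the length-$2$ shortcut that an induced $K_3$ would introduce. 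The independence of $T$ guarantees that when both endpoints are shadow vertices $u',v'$ the path does not collapse (no edge $uv$), and the absence of universal vertices ensures the relevant geodesics exist without degeneracies.

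The mixed case $u \in T$, $v = z' \in V'$ is the delicate one and is where I expect the main obstacle to lie. Following Theorem~\ref{thm:graph-inequality}, I would argue that any $u,v$-geodesic $P$ in $S(G)$ that contains a third vertex $x \in S$ must have $x \in V'$: the point is that a vertex of $T$ appearing as an \emph{internal} vertex of a geodesic forces a configuration that triangle-freeness and totality of $T$ forbid. Concretely, if some $w \in T$ were internal to $P$, then since $T$ is total and $G$ triangle-free, I must rule out that $w$ lies strictly between $u$ and $z'$; this requires showing that the only way a vertex of $T$ sits inside a geodesic is via a length-$2$ pattern $x,w,x'$, which is impossible when $w$ is simultaneously forced to be $S$-blocking. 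Once every blocking interior vertex is shown to be a shadow vertex, replacing each by its twin produces an $S$-sound geodesic, completing the proof. The main difficulty is thus the careful case analysis of internal vertices of geodesics through a twin pair, and ensuring that the totality and triangle-free hypotheses together eliminate every $S$-unsound configuration.
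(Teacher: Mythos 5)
Your construction $S = T \cup V'$ is exactly the paper's, but your verification has a genuine gap precisely where both hypotheses of the theorem must be used: the case of two shadow vertices $u',v'$ whose underlying vertices are \emph{adjacent} in $G$. You dismiss this case by claiming that ``the independence of $T$ guarantees \dots no edge $uv$'', but $V'$ consists of the shadows of \emph{all} vertices of $G$, not only of the vertices of $T$, so $u$ and $v$ may very well be adjacent. Moreover, your reading of Lemma~\ref{distance_shadow}(2)(c) is inverted: for adjacent $u,v$ in a triangle-free graph one has $d_{S(G)}(u',v') = 3 \neq 1 = d_G(u,v)$, so geodesics do \emph{not} ``transfer cleanly'' here --- a sound path of length $3$ must be built from scratch. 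This is where the paper does its real work: by independence of $T$ at most one of $u,v$ lies in $T$, say $v \notin T$; since $v$ is not universal there is a vertex at distance at least $2$ from $v$, and since $T$ is a \emph{total} mutual-visibility set a sound geodesic from that vertex to $v$ produces a neighbour $w$ of $v$ with $w \notin T$; then $u',v,w,v'$ is an $S$-sound $u',v'$-geodesic of length $3$. This is the only point where the ``no universal vertex'' hypothesis enters, whereas in your write-up it is invoked only vaguely (``without degeneracies''). Without this case the proof does not go through.

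Your treatment of the mixed case $u \in T$, $v = z' \in V'$ is also misdirected: it is not the delicate case, and the blocking-vertex analysis you import from Theorem~\ref{thm:graph-inequality} does not carry over, since that argument exploits the fact that a leaf can only be an internal vertex of a geodesic of length $2$, which has no analogue for vertices of $T$. It is also unnecessary: mutual visibility only asks for \emph{one} sound geodesic, not control over all of them. Since $T$ is a total mutual-visibility set, there is a $T$-sound $u,z$-geodesic in $G$; replacing its terminal vertex $z$ by $z'$ (via Lemma~\ref{distance_shadow}) yields an $S$-sound $u,z'$-geodesic in $S(G)$, and for the twin pair $u,u'$ the path $u,w,u'$ works for any neighbour $w$ of $u$, which lies outside $S$ by independence of $T$. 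So the needed repairs are: handle the adjacent-twins shadow--shadow case as above, and replace your mixed-case argument by the direct lift.
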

\begin{proof}
Let $S$ be a maximum independent total mutual visibility set of $G$, and let $V=V(G)$. 
We prove that the set $X = S \cup V'$ is a mutual visibility set of $S(G)$. 
Choose arbitrary vertices $u,v \in S$. 
Then $u$ and $v$ are non-adjacent in $G$, and there exists an $S$-sound $u,v$-geodesic $P$ in $G$. 
Consequently, $P$ becomes an $X$-unsound $u,v$-geodesic in $S(G)$.
 Now, choose arbitrary vertices $u^{\prime}, v^{\prime} \in V^{\prime}$, and let $u$ and $v$ be the corresponding twins of $u^{\prime}$ and $v^{\prime}$, respectively. Suppose that $uv \in E(G)$. Then $S$ contains at most one of $u$ and $v$; without loss of generality, assume that $u \in S$ and $v \notin S$. Since $G$ is triangle-free, it follows from Theorem~\ref{distance_shadow} that $d_G(u^{\prime}, v^{\prime}) = 3$.
  Recall that $v$ is not a universal vertex of $G$. Since $S$ is a total mutual visibility set of $G$, it follows that $v$ has a neighbor $w$ that does not belong to $S$. This, in turn, implies that the path $u',v,w,v'$ is a $u',v'$-geodesic that is $X$-sound in $S(G)$.
On the other hand, suppose that $uv \notin E(G)$. Again, since $S$ is a total mutual visibility set of $G$, there exists a $u,v$-geodesic in $G$, say $
P : u = u_1, u_2, \dots, u_t = v,$ with $t \geq 3$ such that $S \cap V(P) \subseteq \{u,v\}$. Hence, the path $
Q : u', u_2, \dots, u_{t-1}, v'$
is an $X$-sound $u',v'$-geodesic in $S(G)$.
Finally, suppose that $u \in S$ and $v' \in V'$. If $v'$ is the twin of $u$, then $u,w,v'$ is an $X$-sound geodesic in $S(G)$ for any neighbor $w$ of $u$ in $G$. Hence, assume that $u$ and $v'$ are not twins in $S(G)$. Since $S$ is a total mutual visibility set, there exists an $S$-sound $u,v$-geodesic in $G$, which again induces an $X$-sound $u,v'$-geodesic in $S(G)$.

\end{proof}
 Theorems \ref{thm-bounds} and \ref{muit-bound} together imply that $\mu(S(G)) = n(G) + \mu(G)$ when $G$ is a connected triangle-free graph satisfying $\mu_{it}(G) = \mu(G)$. Trees form a rich subclass of such graphs, since for any tree $T$, the set of all leaves is a maximum mutual visibility set of $T$. This observation further shows that the lower bounds in Theorems \ref{thm:graph-inequality} and \ref{muit-bound}, as well as the upper bound in Theorem \ref{thm-bounds}, are sharp.

\begin{theorem}\label{mv-tree}
    For any tree $T$ with diameter at least 3, $\mu(S(T)) = n(T)+l(G)$. 
\end{theorem}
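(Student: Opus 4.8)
The plan is to pin $\mu(S(T))$ between a lower and an upper bound that both evaluate to $n(T)+l(T)$, using results already in hand. (In the statement $l(G)$ should read $l(T)$.) The lower bound requires no new work: since $\diam(T)\ge 3$ forces $n(T)\ge 4\ge 3$, Theorem~\ref{thm:graph-inequality} applies directly and gives $\mu(S(T))\ge n(T)+l(T)$. Alternatively, one could route the lower bound through Theorem~\ref{muit-bound}, which is legitimate because a tree is triangle-free and $\diam(T)\ge 3$ precludes a universal vertex; then $\mu_{it}(T)=l(T)$ would supply the same estimate. I would take the first, more direct option.

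For the matching upper bound I would invoke Theorem~\ref{thm-bounds}, which yields $\mu(S(T))\le n(T)+\mu(T)$. It therefore suffices to establish $\mu(T)\le l(T)$. Here I would use the fact, recorded in the discussion following Theorem~\ref{muit-bound}, that for any tree the leaf set $L(T)$ is a \emph{maximum} mutual-visibility set, so that $\mu(T)=l(T)$. That $L(T)$ is a mutual-visibility set is immediate: the unique geodesic between two leaves has all its internal vertices of degree at least two and hence contains no leaf internally. Consequently $\mu(S(T))\le n(T)+\mu(T)=n(T)+l(T)$, and combined with the lower bound this pinches $\mu(S(T))$ to exactly $n(T)+l(T)$.

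The only substantive point is the inequality $\mu(T)\le l(T)$: visibility of the leaf set is trivial from the tree structure, but its \emph{maximality} is the content that carries the argument. I would cite the known tree formula for this; if a self-contained proof were required, I would argue that in a tree any mutual-visibility set can be shifted toward the leaves without loss, since an internal vertex blocks the visibility of the (at least two) leaf-subtrees hanging on opposite sides of it.

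Finally, I would note why the hypothesis $\diam(T)\ge 3$ is genuinely needed: for the star $K_{1,n-1}$ the candidate value $n(T)+l(T)=2n(T)-1$ exceeds the true value, since the center is a universal vertex and Theorem~\ref{thm-bounds} forces $\mu(S(T))=2n(T)-2$. Thus the diameter condition precisely excludes the stars, on which the formula would otherwise fail.
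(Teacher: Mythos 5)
Your proposal is correct and takes essentially the same route as the paper, which proves this theorem only implicitly in the paragraph preceding it: the lower bound $n(T)+l(T)$ comes from Theorem~\ref{muit-bound} (trees are triangle-free, a tree of diameter at least $3$ has no universal vertex, and $\mu_{it}(T)=\mu(T)=l(T)$ since the leaf set is a maximum mutual-visibility set of a tree), and the matching upper bound comes from $\mu(S(T))\le n(T)+\mu(T)=n(T)+l(T)$ via Theorem~\ref{thm-bounds}. Your primary lower-bound route through Theorem~\ref{thm:graph-inequality} and your acknowledged alternative through Theorem~\ref{muit-bound} are exactly the bounds the paper itself points to as being attained by trees, so the two arguments coincide in substance.
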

Despite the sharpness of the lower bound in Theorem \ref{thm:graph-inequality} and \ref{muit-bound}, the gap between this bound and the true value can be arbitrarily large. For example, let $G_k(k \geq 2)$ denote the balloon graph formed by taking $k$ disjoint copies of $C_5$ and adding a  new vertex $v$ and joining $v$ to exactly one vertex in each copy of $C_5$. Then
\[\mu(S(G_k)) - \bigl(n(G_k) + \mu_{it}(G_k)\bigr) \geq k
\quad \text{and} \quad
\mu(S(G_k)) - \bigl(n(G_k) + l(G_k)\bigr) \geq k.
\]
By Theorem~\ref{total2}, $\mu_t(G_k)=0$, and hence $\mu_{it}(G_k)=0$. 
Furthermore, there exists a mutual visibility set of $S(G_k)$ of size $6k+1$, implying that $\mu(S(G_k)) \geq 6k+1$. Therefore,
\[
\mu(S(G_k)) - \bigl(n(G_k) + \mu_{it}(G_k)\bigr)
\geq (6k+1) - (5k+1)
= k.
\]
The second inequality follows by an analogous argument.


\subsection{Characterization}
In this section, we provide a characterization of graphs whose shadow graphs have mutual visibility numbers equal to $2$ and $4$. Moreover, we show that there exists no connected graph $G$ such that $\mu(S(G)) \in \{3,5\}$. In addition, we derive explicit formula for the mutual visibility number of the shadow graph of cycles.

\begin{proposition}
    The following holds for any graph $G$,
    \begin{enumerate}[label=\roman*.]
        \item  $\mu(S(G))=2$ if and only if $G \cong P_2$.
      \item  $\mu(S(G))= 4$ if and only if $G$ is either $P_3$ or $C_3$.
    \item There exists no connected graph $G$ for which $\mu(S(G)= 3$ or $\mu(S(G)=5$.
    \end{enumerate}
\end{proposition}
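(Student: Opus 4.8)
The plan is to establish each of the three parts by combining the lower bounds from Theorem~\ref{thm-bounds} with a case analysis driven by the $\mu$-partition $(V_1,V_2,V_3,V_4)$ and the identity $\mu(S(G))=n+n_1-n_4$. The backward directions (computing $\mu(S(G))$ for the specific small graphs $P_2$, $P_3$, $C_3$) are routine direct verifications, so the substance of the argument lies in the forward directions, where a small value of $\mu(S(G))$ must force $G$ into a short list of possibilities.

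For part~(i), I would first observe that $\mu(S(G))\ge n(G)$ by Theorem~\ref{thm-bounds}, so $\mu(S(G))=2$ immediately forces $n(G)\le 2$. Since $G$ is connected with $n\ge 1$ and $\mu(S(G))=2>1$, the only candidate is $n(G)=2$, i.e.\ $G\cong P_2$; a direct check gives $\mu(S(P_2))=2$. For part~(iii), the non-existence claims come from the same bound together with the upper bound $\mu(S(G))\le 2n(G)-2$ and the lower bound $\mu(S(G))\ge n(G)+l(G)$ of Theorem~\ref{thm:graph-inequality}. The key sub-goal is to show that $\mu(S(G))$ can never equal $3$: since $\mu(S(G))\ge n(G)$, a value of $3$ forces $n(G)\in\{2,3\}$, and I would rule out $n(G)=2$ (which gives $\mu=2$) and then enumerate the connected graphs on three vertices ($P_3$ and $C_3$), checking that both give $\mu=4$, not $3$. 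The value $5$ is the harder of the two and is where I expect the main obstacle to lie.

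For the $\mu(S(G))=5$ case, the bound $n(G)\le 5$ still reduces the problem to finitely many $G$, but the case analysis is more delicate because several graphs on $4$ or $5$ vertices must be examined and each requires computing $\mu(S(G))$ exactly. My approach would be to use the partition identity: writing $\mu(S(G))=n+n_1-n_4=5$ and combining with Theorem~\ref{thm:graph-inequality} ($\mu(S(G))\ge n+l(G)$, forcing $l(G)\le 5-n$) together with $2\Delta(G)\le \mu(S(G))=5$ (forcing $\Delta(G)\le 2$, so $G$ is a path or a cycle). Restricting to paths and cycles on at most five vertices and invoking the already-established values $\mu(S(P_2))=2$, $\mu(S(P_3))=\mu(S(C_3))=4$, and the cycle formula for $S(C_4),S(C_5)$, I would check that none yields exactly $5$. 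The parity obstruction is the crux: I expect $\mu(S(G))$ to be forced into even values for these small structured graphs, so the heart of the proof is showing that no connected $G$ with $\Delta(G)\le 2$ and $n(G)\le 5$ achieves the odd value $5$.

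For part~(ii), the forward direction combines $\mu(S(G))=4$ with $\mu(S(G))\ge 2\Delta(G)$ (giving $\Delta(G)\le 2$) and $\mu(S(G))\ge n(G)$ (giving $n(G)\le 4$), again reducing $G$ to a path or cycle on at most four vertices. I would then eliminate $P_2$ (value $2$), $P_4$ and $C_4$ by direct computation or by the cycle formula, leaving exactly $P_3$ and $C_3$, each of which I verify attains $\mu(S(G))=4$ for the backward direction. The overall structure is therefore uniform across all three parts: translate the target value into sharp numerical constraints on $n(G)$, $\Delta(G)$, and $l(G)$ via the established bounds, reduce to a finite checklist of small graphs, and verify each by direct use of the $\mu$-partition identity.
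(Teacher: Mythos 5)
Your proposal is correct and follows essentially the same route as the paper: both use the lower bounds of Theorem~\ref{thm-bounds} to force $\Delta(G)\le 2$ and to bound $n(G)$, reduce to paths and cycles, and then finish with Theorem~\ref{mv-cycle}, Theorem~\ref{mv-tree} (or the bound $\mu(S(G))\ge n(G)+l(G)$ of Theorem~\ref{thm:graph-inequality}), plus direct verification for $P_2$, $P_3$, $C_3$. One small caveat: the ``parity obstruction'' you anticipate for the value $5$ is illusory (e.g.\ $\mu(S(P_5))=7$ is odd), but this is harmless since your argument ultimately rests on the finite enumeration of paths and cycles, exactly as the paper's does.
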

\begin{proof}
  
For part (i), it is straightforward to verify that $\mu(S(G)) = 2$ if $G \cong P_2$. Conversely, suppose that $\mu(S(G)) = 2$. Then the lower bound in Theorem \ref{thm-bounds} implies that $\Delta(G) = 1$, and hence $G \cong P_2$.
 To prove (ii), it is straightforward to verify that 
$\mu(S(P_3)) = \mu(S(C_3)) = 4$. 
Conversely, suppose that $\mu(S(G)) = 4$. 
Then, by the lower bound in Theorem \ref{thm-bounds}, we have 
$\Delta(G) \leq 2$ and $n(G) \leq 4$. 
By part (i), it follows that $\Delta(G) = 2$, and hence $G$ is either a path or a cycle. 
Now, Theorems \ref{mv-cycle} and \ref{mv-tree} together imply that $n(G) = 3$. 
Consequently, $G \cong P_3$ or $G \cong C_3$.

To prove (iii), suppose that there exists a connected graph $G$ such that $\mu(S(G)) = 3$. 
Then, by Theorem \ref{thm-bounds}, we have $\Delta(G) = 1$, which contradicts part (i). 
Similarly, suppose that $\mu(S(G)) = 5$. 
Then Theorem \ref{thm-bounds}, together with part (i), implies that $\Delta(G) = 2$. 
Hence, $G$ is either a path or a cycle. 
However, Theorem \ref{mv-cycle} shows that $G$ must be a path. 
Consequently, by Theorem \ref{mv-tree}, we have $G \cong P_3$, which contradicts part (ii).

\end{proof}

\begin{theorem}\label{mv-cycle}
 For a cycle $C_n$, 
 $$ \mu(S(C_n))= \begin{cases}
        6, & \text{if } n=4\\
        n+1, & \text{if } n \in \{3,5,6\}\\
        n, & \text{if } n\geq 7.
    \end{cases}$$
\end{theorem}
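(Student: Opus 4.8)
The plan is to treat the three regimes separately, since the small cycles $C_3,C_4,C_5,C_6$ are genuinely sporadic while $n\geq 7$ is the generic case governed by the bounds already established. For the large case I would combine the general upper bound $\mu(S(G))\leq n+\mu(G)$ from Theorem~\ref{thm-bounds} with the known value $\mu(C_n)$. For $n\geq 7$ one has $\mu(C_n)=3$ in general, but what is actually needed here is that no three shadow vertices that are "spread out" can sit on a common geodesic, together with a contribution argument through the $\mu$-partition $(V_1,V_2,V_3,V_4)$. Since Theorem~\ref{thm-bounds} already gives the lower bound $\mu(S(C_n))\geq n(C_n)=n$ (via the set $V'$ of all shadow vertices), the main work for $n\geq 7$ is the matching upper bound $\mu(S(C_n))\leq n$, i.e. showing $n_1\leq n_4$ for every $\mu$-partition.

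For the upper bound when $n\geq 7$, I would argue that $V_1$ itself is a mutual-visibility set of $C_n$, so $n_1\leq\mu(C_n)=3$; then I would show that each vertex of $V_1$ forces a distinct vertex into $V_4$, giving $n_4\geq n_1$ and hence $\mu(S(C_n))=n+n_1-n_4\leq n$. The forcing step is the crux: if $v\in V_1$, then both $v$ and $v'$ lie in $S$, and since $\deg_{C_n}(v)=2$ with $C_n$ triangle-free for $n\geq 4$, I would use the structure of the two neighbors of $v$ to produce an $S$-unsound configuration unless a neighbor is excluded from $S$, forcing it into $V_4$. Concretely, if a neighbor $u$ of $v$ had $u$ or $u'$ in $S$, one of the short geodesics through $v$ or $v'$ (as in Lemma~\ref{distance_shadow}(2)) would contain three vertices of $S$; careful bookkeeping shows the forced $V_4$-vertices are distinct across different elements of $V_1$ because the girth is at least $7$, so neighborhoods of $V_1$-vertices do not overlap destructively. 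This girth condition is exactly why the threshold sits at $n=7$.

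For the sporadic values I would verify each directly. For $C_3$, $C_5$, $C_6$ the claim is $\mu(S(C_n))=n+1$; the lower bound $n+1$ comes from exhibiting an explicit mutual-visibility set (for $C_3$ one uses $\mu(S(C_3))=2\Delta=4$ from Theorem~\ref{thm-bounds}; for $C_5,C_6$ one constructs a set realizing $n_1-n_4=1$), and the upper bound follows from $\mu(S(C_n))\leq n+\mu(C_n)$ together with the small-case values $\mu(C_3)=\mu(C_5)=\mu(C_6)=3$ but sharpened by a parity/adjacency check showing $n_1-n_4\leq 1$ rather than $3$. The case $C_4$ giving $6=2n-2$ is the extremal upper bound $2n(G)-2$ of Theorem~\ref{thm-bounds}: since $C_4=K_{2,2}$ is complete bipartite, Proposition~\ref{thm_multipartite} applies directly and yields $\mu(S(C_4))=2\cdot 4-2=6$.

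The main obstacle I anticipate is the forcing argument for $n\geq 7$, specifically proving that distinct vertices of $V_1$ contribute \emph{distinct} vertices to $V_4$ so that $n_4\geq n_1$ holds without double-counting; this is where the diameter and girth of $C_n$ must be invoked carefully, tracking which of the length-$2$ and length-$3$ geodesics of Lemma~\ref{distance_shadow} create unsound triples. The small cases are computationally light but require attention to the exact geodesic structure of $S(C_n)$, particularly the distinction in Lemma~\ref{distance_shadow}(2c) between shadow-vertex distances $2$ and $3$, which behaves differently for $C_3$ (where a triangle exists) than for the triangle-free cycles $C_n$ with $n\geq 4$.
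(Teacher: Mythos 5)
Your skeleton (lower bounds from Theorem~\ref{thm-bounds} and explicit sets, upper bounds via the $\mu$-partition, $C_4$ via Proposition~\ref{thm_multipartite}) is the paper's strategy, and your $C_3$ and $C_4$ cases do work ($C_4=K_{2,2}$ is in fact cleaner than the paper's ``immediate''; for $C_3$ no sharpening is needed, since the bound $\mu(S(G))\le 2n(G)-2$ of Theorem~\ref{thm-bounds} already gives $\mu(S(C_3))\le 4$). The genuine gap is the step you yourself flag as the crux for $n\ge 7$: distinctness of the forced $V_4$-vertices. First, your local claim is too strong: a neighbour $u$ of $v\in V_1$ with $u'\in S$ does \emph{not} create an unsound geodesic (the paper's extremal sets $\{v_1,v_{n-1}\}\cup\{v_1',\dots,v_{n-1}'\}$ for $n\in\{5,6\}$ contain exactly such pairs); what is true is only that not both neighbours of $v$ can avoid $V_4$. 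Second, and fatally, the justification ``girth $\ge 7$ means neighbourhoods of $V_1$-vertices do not overlap destructively'' is false: in every cycle two vertices at distance $2$, e.g.\ $V_1=\{v_1,v_3\}$, share the neighbour $v_2$, so your forcing yields only $n_4\ge 1$ while $n_1=2$. No neighbourhood-overlap argument can repair this, because for $n\in\{5,6\}$ precisely this shared-neighbour configuration ($V_1=\{v_1,v_{n-1}\}$, $V_4=\{v_n\}$) is realized by a mutual-visibility set of size $n+1$. What rules it out for $n\ge 7$ is a longer-range obstruction that your proposal never invokes: with $v_3,v_3'\in S$ and $v_2,v_2'\notin S$, every $v_1,v_4$-geodesic and every $v_1,v_4'$-geodesic in $S(C_n)$ has its interior projecting onto the interior $\{v_2,v_3\}$ of the unique $v_1,v_4$-geodesic of $C_n$ (uniqueness is exactly where $n\ge 7$ enters, the other arc having length $n-3\ge 4$), hence passes through $v_3$ or $v_3'$ and is unsound; so $v_4\in V_4$, and symmetrically $v_n\in V_4$, giving $n_4\ge 3$. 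Without an argument of this kind, the inequality $n_4\ge n_1$ is unproved.

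The same kind of gap occurs at $n\in\{5,6\}$: there the upper bound $\mu(S(C_n))\le n+1$ is the substantive half, and you reduce it to an unspecified ``parity/adjacency check showing $n_1-n_4\le 1$''. The paper has to work for this: assuming a mutual-visibility set of size $n+2$, it shows $V_1$ is independent (for adjacent $u,v\in V_1$ all $u',v'$-geodesics pass through $u$ or $v$, by Lemma~\ref{distance_shadow}), hence $2\le |V_1|\le 3$; it rules out $|V_1|=3$; and for $V_1=\{x,y\}$ with $d_{C_n}(x,y)=l\ge 2$ it uses the fact that $C_n$ has at most two $x,y$-geodesics to force $|S|\le n-(l-1)+2\le n+1$. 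Something equivalent must be supplied. As written, both nontrivial upper bounds in the theorem rest on assertions rather than proofs, and the one mechanism you do propose for $n\ge 7$ fails on concrete configurations.
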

\begin{proof}
    
For $n=3$ and $n=4$, the result is immediate. For $n\in \{5,6\}$, we fix the cycle $C_n:v_1,v_2,\dots,v_n,v_1$. Suppose, to the contrary, assume that there exists a mutual visibility set $S$ of $S(C_n)$ with cardinality $n+2$. Let $(V_1,V_2,V_3,V_4)$ be the corresponding $\mu$-partition of $V(C_n)$. Since $|S|=n+2$, we have that $\left|V_1 \right|\geq 2$. We claim $V_1$ is independent in $C_n$. Choose distinct vertices $u, v$ in $V_1$. Suppose $uv \in E(C_n)$, then the unique $u',v'$-geodesic  $P:u', v, u, v'$ become $S$-unsound,  impossible. Hence $V_1$ is independent in $G$ and so $|V_1|\leq 3$. Suppose that  $|V_1|= 3$ . Then $n=6$ and  $|S \setminus (V_1 \cup V_1')|= 2 $.  This in turn implies that $S$ cannot be a mutual visibility set of $S(C_6)$. Thus $\left|V_1\right|=2$.and we fix $V_1=\{x,y\}$ with $d_{C_n}(x,y)=l\geq 2$. Since $C_n$ has at  most two $x,y$-geodesics of length $l$, it must hold that $|S|\leq n-(l-1)+2\leq n+1,$ a contradiction. Therefore, $\mu(S(G))\leq n+1$ for $n\in \{5,6\}$. On the other hand, for $n\in\{5,6\}$, the set $\{v_1,v_{n-1}\}\cup\{v_1',v_2',\dots,v_{n-1}'\}$ is a mutual visibility set of size $n+1$. Hence the result follows.

 Now, assume that  $n\geq 7$.  Suppose there exists a mutual visibility set $S$ in $S(C_n)$ with $|S|= n+1$. Then $V_1\neq\emptyset$. As in the previous case, we can prove that $|V_1|\leq 2$.  Suppose that $V_1=\{v_i,v_j\}$. Then $|i-j|\geq 2$ and there exists a $v_i,v_j$-geodesic $P$ in $G$ such that $V(P)\cap S=\{v_i,v_j\}$.  But since $S$ contains at least 8 vertices, it follows that  $|S|\leq n-1$,  a contradiction . Thus, $|V_1|=1$ and we denote this unique vertex in $V_1$ by $v_1$. Since $|S|=n+1$, each vertex $w\neq v_1$ in $V(C_n)$ lies in either $V_2$ or $V_3$. This shows that  $N_G(v_1)\cap S=\emptyset$, and hence both $v_{n}'$ and $v_{2}'$ are in $S$. However, the path $v_{n}',\,v_1,\,v_{2}'$ is the unique $v_n',v_2'$-geodesic in $S(C_n)$ which is $S$-unsound, impossible. Therefore, $\mu(S(C_n))\leq n$, and the result follows from Theorem~\ref{thm-bounds}.
 
\end{proof}

\end{document}